\documentclass[11pt]{amsart}

\pdfoutput=1
\usepackage{geometry} 
\geometry{a4paper} 
\usepackage{graphicx,url}
\usepackage{amssymb,amsmath, amsthm}
\usepackage{epstopdf}
\usepackage{subfig}
\usepackage[applemac]{inputenc} 

\newtheorem{theorem}{Theorem}
\newtheorem{remark}{Remark}[section]
\newtheorem{corollary}[theorem]{Corollary}

\title{A family of random walks with generalized Dirichlet steps}
\keywords{Bessel function, Generalized Dirichlet distribution, Isotropic random walk, Random flight, Statistical physics problem, Uniform distribution.}
\numberwithin{equation}{section}
\author{ Alessandro De Gregorio}

\address{Dipartimento di Scienze Statistiche,
``Sapienza" University of Rome,
P.le Aldo Moro, 5 - 00185, Rome, Italy}
\email{alessandro.degregorio@uniroma1.it}
\allowdisplaybreaks
\begin{document}

\maketitle
\begin{abstract}
We analyze a class of continuous time random walks in $\mathbb R^d,d\geq 2,$ with uniformly distributed directions. The steps performed by these processes are distributed according to a generalized Dirichlet law. Given the number of changes of orientation, we provide the analytic form of the probability density function of the position $\{\underline{\bf X}_d(t),t>0\}$ reached, at time $t>0$, by the random motion. In particular, we analyze the case of random walks with two steps. 

In general, it is an hard task to obtain the explicit probability distributions for the process $\{\underline{\bf X}_d(t),t>0\}$	. Nevertheless, for suitable values for the basic parameters of the generalized Dirichlet probability distribution, we are able to derive the explicit conditional density functions of $\{\underline{\bf X}_d(t),t>0\}$. Furthermore, in some cases, by exploiting the fractional Poisson process, the unconditional probability distributions are obtained. 
This paper extends in a more general setting, the random walks with Dirichlet displacements introduced in some previous papers. 
\end{abstract}

\section{Introduction}
Several authors over the years analyzed continuous time non-Markovian random walks, that describe the motion of a non-interacting particle walking in straight lines and turning through any angle whatever, i.e. with uniformly distributed directions on the unit sphere. Usually, the main object of interest is represented by the position reached by the particle after a fixed or random number of steps. These stochastic processes are called ``Pearson random walks" or equivalently ``random flights". Many statistical physics problems can be studied by taking into account these random motions. We discuss two examples (see Hughes, 1995, and Weiss, 2002, for other applications):
\begin{itemize}
\item the Pearson walks are equivalent to the problem of addition, at the time $t>0$,  of waves with the same frequency $\omega$ e and arbitrary phase $\phi$, that is $X(t)=\sum_{k=1}^nX_k(t)=\sum_{k=1}^na_k\cos(\omega t+\phi_j),$ with $a_k>0$. For instance, an application of the random flights arises in the analysis of the ``laser speckle''. This phenomenon is represented by the speckles caused by the superposition of laser light waves reflected by a surface. 
The scattered waves interfere in different ways on the surface and then at some points the interference leads to strong brightness, while at other points it produces low intensity. Then the laser speckle can be modeled by means of the Pearson walk $X(t)$;
\item the random flights provide a model of the statistical mechanics of a polymeric chains. Indeed, a backbone of a linear polymer consists in a sequence of atoms. It may be displayed as a sequence of straight lines connecting the centres of the atoms on the backbone. Several configurations over the time are possible. Therefore, we can describe the configurations in terms of random flights.
\end{itemize} 

Many papers (see, for instance, Stadje, 1987, Masoliver {\it et al}. 1993, Orsingher and De Gregorio, 2007 and Garcia-Pelayo, 2012) analyzed random flights in the multidimensional real spaces. Usually the main assumption concerns the underlying homogeneous Poisson process governing the changes of direction of the random walk. Poisson paced times imply that the time lapses are exponentially distributed. Unfortunately, under these assumptions, the explicit distribution of the random flight is obtained only in two spaces: $\mathbb R^2$ and $\mathbb R^4$. 

In the last years, the research in this field is mainly devoted to the analysis of random motions with non-uniformly distributed lengths of the time displacements between consecutive changes of direction.
Indeed, the exponentially distributed steps assign high probability mass to short intervals and for this reason they are not suitable for many important applications in physics, biology, and engineering. For example, Beghin and Orsingher (2010) introduced a random motion which changes direction at even-valued Poisson events; this implies that the time between successive deviations is a Gamma random variable. This model can also be interpreted as the motion of particles that can hazardously collide with obstacles of different size, some of which are capable of deviating the motion. Recently, multidimensional random walks with Gamma intertimes have been also taken into account by Le Ca\"er (2011) and Pogorui and Rodriguez-Dagnino (2011), (2013). Le Ca\"er (2010) and De Gregorio and Orsingher (2012) considered the joint distribution of the time displacements as Dirichlet random variables with parameters depending on the space in which the random walker performs its motion. The Dirichlet law assigns higher probability to time displacement with intermediate length and allows to explicit (for suitable choices of the parameters), for each space $\mathbb{R}^d,d\geq 2,$ the exact probability distribution of the position reached by the random motion at time $t>0$. De Gregorio (2012) dealt with a random flight in $\mathbb R^d$ with Dirichlet steps and non-uniformly distributed directions, while Pogorui and Rodriguez-Dagnino (2012) considered random flights with uniformly distributed directions and random velocity.

The aim of this paper is to analyze the multidimensional random walks with generalized Dirichlet displacements. The generalized Dirichlet distribution, introduced by Connor and Mosimann (1969), has a more general covariance structure  than classical Dirichlet distribution and, furthermore, it contains a greater number of parameters. For these reasons it allows to capture many features of the analyzed phenomenon.

The paper is organized as follows. In Section \ref{sec1}, we introduce the random walks studied in this paper, we indicate its position at time $t>0$ by $\{\underline{\bf X}_d(t),t>0\}$. Given the number of the steps, $n\in\mathbb N$, performed by the random motion, in Section \ref{sec2} we provide the expression of its density function in integral form.  For $n=1$, some explicit results are obtained. Section \ref{sec3} gives the exact probability distributions of $\{\underline{\bf X}_d(t),t>0\}$, when the parameters of the generalized Dirichlet distribution are suitably chosen. We obtain several distributions and the related random walks are compared. 
Section \ref{sec4} contains some remarks on the unconditional probability distributions of $\{\underline{\bf X}_d(t),t>0\}$.

\section{Description of the random walks}\label{sec1}

We describe the class of random walks studied in this paper. Let us consider a particle or a walker which starts from the origin of $\mathbb{R}^d,d\geq 2$, and performs its motion with a constant velocity $c>0$. We indicate by $0=t_0<t_1<t_2<...<t_k<...$ the random instants at which the random walker changes direction and denote the length of time separating these instants by $\tau_k=t_k-t_{k-1}$, $k\geq 1$. Let $\mathcal N(t)=\sup\{k\geq 1:t_k\leq t\}$ be the (random) number of times in which the random motion changes direction during the interval $[0,t]$. If, at time $t>0$, one has that $\mathcal N(t)=n$, with $n\geq 1$, the random motion has performed $n+1$ steps. We observe that ${\bf \underline\tau}_n=(\tau_1,...,\tau_n)\in S_n$, where $S_n$ represents the open simplex
$$ S_n=\left\{(\tau_1,...,\tau_n)\in\mathbb R^n: 0<\tau_k<t-\sum_{j=0}^{k-1}\tau_j, k=1,2,...,n\right\},$$ with $\tau_0=0$ and $\tau_{n+1}=t-\sum_{j=1}^n\tau_j$.

The directions of the particle are represented by the points on the surface of the $(d-1)$-dimensional sphere with radius one.  We denote by $\underline{\theta}_{d-1}=(\theta_1,...,\theta_{d-2},\phi)$ the random vector  (independent from ${\bf \underline\tau}_n$) representing the orientation of the particle; we assume that $\underline{\theta}_{d-1}$ has uniform distribution on the $(d-1)$-sphere with radius one $\mathbb{S}_{1 }^{d-1}=\{\underline{\bf x}_d\in \mathbb R^d:||\underline{\bf x}_d||=1\}$. Therefore, by observing that meas$(\mathbb{S}_{1 }^{d-1})=\frac{2\pi^{\frac d2}}{\Gamma(\frac d2)}$ and the volume element of $\mathbb{S}_{1 }^{d-1}$ is given by $$d\mathbb{S}_{1 }^{d-1}=\sin^{d-2}\theta_1\sin^{d-3}\theta_2\cdots \sin\theta_{d-2} d\theta_1d\theta_2\cdots d\theta_{d-2},$$ the probability density function of $\underline{\theta}_{d-1}$ is equal to
\begin{equation}\label{eq:jointdis1}
\varphi(\underline{\theta}_{d-1})=\frac{\Gamma(\frac d2)}{2\pi^{\frac d2}}\sin^{d-2}\theta_1\sin^{d-3}\theta_2\cdots \sin\theta_{d-2},
\end{equation}
where $\theta_k\in[0,\pi],k\in\{1,...,d-2\},\,\phi\in[0,2\pi]$. Furthermore the particle chooses a new direction independently from the previous one.

Let us denote with  $\{\underline{\bf X}_d(t),t>0\}$ the process representing the position reached, at time $t>0$, by the particle moving randomly according to the rules described above. The position $\underline{\bf X}_d(t)=(X_1(t),...,X_d(t)),$ is the main object of investigation. By setting $\mathcal N(t)=n$, the random walk $\{\underline{\bf X}_d(t),t>0\},$ can be described in the following manner
\begin{equation}\label{eq:definition}
\underline{\bf X}_d(t)=c\sum_{k=1}^{n+1}{\bf \underline{V}}_k\tau_k,
\end{equation}
where ${\bf\underline{V}}_k,k=1,2,...,n+1,$ are independent $d$-dimensional random vectors defined as follows
$${\bf\underline{V}}_k=
\left(
\begin{array}{l}
V_{1,k}\\
V_{2,k}\\
...\\
V_{d-1,k}\\
V_{d,k}
	\end{array}\right)=\left(
 \begin{array}{l} 
 \cos\theta_{1,k}\\
    \sin\theta_{1,k}\cos\theta_{2,k}\\
     
     ...\\
     \sin\theta_{1,k}\sin\theta_{2,k}\cdot\cdot\cdot\sin\theta_{d-2,k}\cos\phi_{k} \\
           \sin\theta_{1,k}\sin\theta_{2,k}\cdot\cdot\cdot\sin\theta_{d-2,k}\sin\phi_{k} 
  
   \end{array}    
    \right)$$
    and $(\theta_{1,k},\theta_{2,k},...,\theta_{d-2,k},\phi_{k})$ has distribution \eqref{eq:jointdis1}.

A crucial role is played by the random vector ${\bf \underline\tau}_n$. We assume that the intervals $\tau_ks$ have the following joint density function
\begin{equation}\label{eq:gdd}
f({\bf \underline\tau}_n; \mathbf{\underline{a}}_{n},\mathbf{\underline{b}}_{n},t)=C(\mathbf{\underline{a}}_{n},\mathbf{\underline{b}}_{n},t)\prod_{k=1}^{n}\left[\tau_k^{a_k-1}(t-\sum_{j=1}^k\tau_j)^{b_k-1}\right], \quad {\bf \underline\tau}_n\in S_n,
\end{equation}
where 
$$C(\mathbf{\underline{a}}_{n},\mathbf{\underline{b}}_{n},t)=\frac{1}{t^{\sum_{k=1}^n(a_k+b_k-1)}}\prod_{k=1}^n\frac{\Gamma(1+\sum_{j=k}^n(a_j+b_j-1))}{\Gamma(a_k)\Gamma(b_k+\sum_{j=k+1}^n(a_j+b_j-1))},$$
with $\mathbf{\underline{a}}_{n}=(a_1,a_2,...,a_n), \mathbf{\underline{b}}_{n}=(b_1,b_2,...,b_n)$ and $a_1,...,a_n,b_1,...,b_n>0$. The density function $f({\bf \underline\tau}_d; \mathbf{\underline{a}}_{n},\mathbf{\underline{b}}_{n},t)$ represents a rescaled generalized Dirichlet distribution (see (2.1) in Chang {\it et al}., 2010, and references therein) that we indicate by $GD(\mathbf{\underline{a}}_{n};\mathbf{\underline{b}}_{n})$. 
This assumption leads to a family of random motions which contains, as particular cases, the processes studied in  Le Ca\"er (2010) and De Gregorio and Orsingher (2012) in the classical Dirichlet setting. By setting $\mathbf{\underline{b}}_{n}=(1,1,...,1,b_n)$, \eqref{eq:gdd} becomes
\begin{equation*}
\frac{\Gamma\left(\sum_{k=1}^{n}a_k+b_n\right)}{\prod_{k=1}^{n}\Gamma\left(a_k\right)\Gamma(b_n)}\frac{1}{t^{\sum_{k=1}^{n}a_k+b_n-1}}\prod_{k=1}^{n}\tau_k^{a_k-1}(t-\sum_{k=1}^n\tau_k)^{b_n-1},
\end{equation*}
which, for $t=1$, is the well-known Dirichelt distribution. If $\mathbf{\underline{a}}_{n}=\mathbf{\underline{b}}_{n}=(1,...,1),$ $GD(\mathbf{\underline{a}}_{n};\mathbf{\underline{b}}_{n})$ becomes the uniform distribution $n!/t^n$ in the simplex $S_n$, appearing in the case of Poisson intertimes. 

In general, it is known that the explicit probability distribution of the Pearson walk $\underline{\bf X}_d(t)$ exists in few cases. The main gain of the above assumption on the random vector ${\bf \underline\tau}_n$,  is that, by using a suitable parametrization of \eqref{eq:gdd}, we get random walks with explicit density functions (see Section \ref{sec3}).

It is worth to point out that the stochastic process treated here can be represented by means of the triple $(\underline{\theta}_{d-1},\underline{\tau}_n,\mathcal{N}(t))$ of independent random vectors: $\underline{\theta}_{d-1}=(\theta_1,...,\theta_{d-2},\phi)$ is the orientation of displacements, with uniform law \eqref{eq:jointdis1}, $\underline{\tau}_{n}=(\tau_1,...,\tau_n)$ represents the time displacements, with distribution $GD(\mathbf{\underline{a}}_{n};\mathbf{\underline{b}}_{n})$, and $\mathcal{N}(t)$ is the number of changes of direction. In the models analyzed by Orsingher and De Gregorio (2007), $\underline{\theta}_{d-1}$ has law coinciding with \eqref{eq:jointdis1},  $\underline{\tau}_{n}$ is uniformly distributed and $\mathcal{N}(t)$ is a homogeneous Poisson process. 
To conclude this section, we remark that the sample paths of \eqref{eq:definition} appear like joined straight lines representing the randomly oriented steps with random lengths.

\section{General results}\label{sec2}
In this section we assume that the number of steps performed by the random walk is fixed and equal to $n+1$, with $n\geq 1$ (i.e. $\mathcal N(t)=n$). Here we  consider the general case in which the random vector $\underline \tau_n$, representing the random displacements between consecutive deviations, follows the generalized Dirichlet distribution \eqref{eq:gdd}. 

\subsection{The general case}
Let $\underline{{\bf x}}_d=(x_1,...,x_d)$, $d\underline{{\bf x}}_d=(dx_1,...,dx_d)$ and $\underline{\alpha}_d=(\alpha_1,...,\alpha_d)\in \mathbb{R}^d$.  Let us indicate by $||\underline{{\bf x}}_d||=\sqrt{\sum_{k=1}^dx_k^2}$ and $<\underline{\alpha}_d,\underline{{\bf x}}_d>=\sum_{k=1}^d\alpha_kx_k$ the Euclidean distance and the scalar product, respectively. The conditional characteristic function of $\{\underline{{\bf X}}_d(t),t>0\}$ is denoted by $$\mathcal F_n(\underline{\bf \alpha}_d)={\bf E} \left.\left\{e^{i<\underline{\alpha}_d,\underline{\bf X}_d(t)>}\right|\mathcal{N}(t)=n\right\}.$$
We remark that at time $t>0$, the random flight  $\{\underline{{\bf X}}_d(t),t>0\}$ with at least two steps, is located inside the ball $\mathcal{B}_{ct}^d=\{\underline{{\bf x}}_d\in\mathbb{R}^d:||\underline{{\bf x}}_d||<ct\}$ with center the origin of $\mathbb{R}^d$ and radius $ct$.

 Let $P(\cdot\in d\underline{{\bf x}}_d|\mathcal{N}(t)=n)=P_n(\cdot\in d\underline{{\bf x}}_d)$.
The first result concerns the conditional density function of $\{\underline{{\bf X}}_d(t),t>0\}$.

\begin{theorem}\label{th1}
Given $\mathcal N(t)=n$, $n\geq 1,$
the density function of $\{\underline{{\bf X}}_d(t),t>0\}$ is equal to

\begin{align}\label{eq:density}
p_n(\underline{{\bf x}}_d,t)&=\frac{P(\underline{{\bf X}}_d(t)\in d\underline{{\bf x}}_d|\mathcal{N}(t)=n)}{\prod_{i=1}^ddx_i}\notag\\
&=\frac{\left\{2^{\frac d2-1}\Gamma\left(\frac d2\right)\right\}^{n+1}}{(2\pi)^{\frac d2}||\underline{\bf x}_d||^{\frac d2-1}}\notag \\
&\quad\cdot\int_0^{\infty} \rho^{\frac d2}J_{\frac d2-1}(\rho ||\underline{\bf x}_d||)
d\rho\int_{S_n}f({\bf \underline\tau}_n; \mathbf{\underline{a}}_{n},\mathbf{\underline{b}}_{n},t)\prod_{k=1}^{n+1}\left\{\frac{J_{\frac d2-1}(c\tau_k\rho)}{(c\tau_k\rho)^{\frac d2-1}} \right\}\prod_{k=1}^{n}d\tau_k,
\end{align}
where $\underline{{\bf x}}_d\in\mathcal B_{ct}^d$ and
$$J_\nu(x)=\sum_{k=0}^\infty(-1)^k\left(\frac{x}{2}\right)^{2k+\nu}\frac{1}{k!\Gamma(k+\nu+1)},\quad x,\nu\in\mathbb{R},$$
is the Bessel function.
\end{theorem}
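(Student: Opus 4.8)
The plan is to derive the conditional characteristic function $\mathcal F_n(\underline\alpha_d)$ of $\underline{\bf X}_d(t)$ and then recover the density \eqref{eq:density} by Fourier inversion, exploiting the isotropy of the walk at every step.

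First I would compute the characteristic function of a single randomly oriented step $c\tau_k\underline{\bf V}_k$. Because $\underline{\bf V}_k$ is uniformly distributed on $\mathbb S_1^{d-1}$ with density \eqref{eq:jointdis1}, the expectation ${\bf E}\{e^{i<\underline\alpha_d,\underline{\bf V}_k>}\}$ depends on $\underline\alpha_d$ only through its norm $||\underline\alpha_d||$. Choosing coordinates so that $\underline\alpha_d=(||\underline\alpha_d||,0,\dots,0)$, the scalar product reduces to $||\underline\alpha_d||\cos\theta_{1,k}$; integrating out the remaining angular variables $\theta_{2,k},\dots,\theta_{d-2,k},\phi_k$ against \eqref{eq:jointdis1} leaves a single integral in $\theta_{1,k}$ proportional to $\int_0^\pi e^{i||\underline\alpha_d||\cos\theta}\sin^{d-2}\theta\,d\theta$. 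The Poisson integral representation of the Bessel function, namely $J_\nu(z)=\frac{(z/2)^\nu}{\sqrt\pi\,\Gamma(\nu+\frac12)}\int_0^\pi e^{iz\cos\theta}\sin^{2\nu}\theta\,d\theta$ with $\nu=\frac d2-1$, then produces
\begin{equation*}
{\bf E}\{e^{i<\underline\alpha_d,\underline{\bf V}_k>}\}=2^{\frac d2-1}\Gamma\left(\frac d2\right)\frac{J_{\frac d2-1}(||\underline\alpha_d||)}{||\underline\alpha_d||^{\frac d2-1}},
\end{equation*}
and replacing $\underline\alpha_d$ by $c\tau_k\underline\alpha_d$ yields the transform of the $k$-th displacement.

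Next, conditioning on $\underline\tau_n$ and exploiting the mutual independence of $\underline{\bf V}_1,\dots,\underline{\bf V}_{n+1}$, the conditional characteristic function factorizes as a product of these single-step transforms. Averaging over the simplex $S_n$ with respect to the generalized Dirichlet density $f({\bf \underline\tau}_n;\mathbf{\underline{a}}_{n},\mathbf{\underline{b}}_{n},t)$ of \eqref{eq:gdd} gives
\begin{equation*}
\mathcal F_n(\underline\alpha_d)=\left\{2^{\frac d2-1}\Gamma\left(\frac d2\right)\right\}^{n+1}\int_{S_n}f({\bf \underline\tau}_n;\mathbf{\underline{a}}_{n},\mathbf{\underline{b}}_{n},t)\prod_{k=1}^{n+1}\frac{J_{\frac d2-1}(c\tau_k||\underline\alpha_d||)}{(c\tau_k||\underline\alpha_d||)^{\frac d2-1}}\prod_{k=1}^n d\tau_k,
\end{equation*}
which again depends on $\underline\alpha_d$ only through $\rho:=||\underline\alpha_d||$.

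Finally I would invert. Since $\mathcal F_n$ is radial, the $d$-dimensional inversion reduces to a Hankel-type integral: for a radial $g(||\underline\alpha_d||)$ one passes to spherical coordinates in $\underline\alpha_d$ and recognizes the same Poisson integral as above to obtain $\frac{1}{(2\pi)^d}\int_{\mathbb R^d}e^{-i<\underline\alpha_d,\underline{\bf x}_d>}g(||\underline\alpha_d||)\,d\underline\alpha_d=\frac{1}{(2\pi)^{\frac d2}||\underline{\bf x}_d||^{\frac d2-1}}\int_0^\infty\rho^{\frac d2}J_{\frac d2-1}(\rho||\underline{\bf x}_d||)\,g(\rho)\,d\rho$. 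Applying this with $g=\mathcal F_n$ and interchanging the $\rho$-integral with the integral over $S_n$ gives precisely \eqref{eq:density}. The main obstacle is justifying this inversion together with the use of Fubini's theorem: using the large-argument decay $J_\nu(z)=O(z^{-1/2})$, the product of $n+1$ Bessel factors in $\mathcal F_n(\rho)$ decays like $\rho^{-(n+1)(d-1)/2}$, so the radial integrand $\rho^{\frac d2}J_{\frac d2-1}(\rho||\underline{\bf x}_d||)\mathcal F_n(\rho)$ behaves like $\rho^{-n(d-1)/2}$ at infinity; this is absolutely integrable only when $n(d-1)>2$, and for the remaining small values of $n$ and $d$ one must invoke the oscillation of the Bessel functions to secure convergence of the improper integral and legitimize the interchange of integration.
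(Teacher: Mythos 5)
Your proposal follows essentially the same route as the paper: you first establish the conditional characteristic function $\mathcal F_n(\underline{\alpha}_d)$ as the $GD(\mathbf{\underline{a}}_{n};\mathbf{\underline{b}}_{n})$-average of a product of $n+1$ factors $2^{\frac d2-1}\Gamma(\frac d2)J_{\frac d2-1}(c\tau_k\rho)/(c\tau_k\rho)^{\frac d2-1}$, and then recover the density by the radial (Hankel-type) Fourier inversion in hyperspherical coordinates, exactly as in the paper's proof of Theorem \ref{th1}; the only differences are cosmetic --- you rederive via the Poisson integral representation the two angular integrals that the paper imports as formulae (2.5) and (2.12) of De Gregorio and Orsingher (2012), and you add a convergence/Fubini discussion that the paper omits. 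The proof is correct.
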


\begin{proof} Let us start the proof showing that under the assumption \eqref{eq:gdd}, the characteristic function of $\underline{\bf X}_d(t)$ is equal to 
\begin{align}\label{eq:cf}
\mathcal F_n(\underline{\bf \alpha}_d)
&=\left\{2^{\frac d2-1}\Gamma\left(\frac d2\right)\right\}^{n+1}\int_{S_n}f({\bf \underline\tau}_d; \mathbf{\underline{a}}_{n},\mathbf{\underline{b}}_{n},t)\prod_{k=1}^{n+1}\left\{\frac{J_{\frac d2-1}(c\tau_k||\underline{\alpha}_d||)}{(c\tau_k||\underline{\alpha}_d||)^{\frac d2-1}} \right\}\prod_{k=1}^{n}d\tau_k.
\end{align}
 We can write that
\begin{align*}
\mathcal F_n(\underline{\bf \alpha}_d)&=\int_{S_n}f({\bf \underline\tau}_d; \mathbf{\underline{a}}_{n},\mathbf{\underline{b}}_{n},t)\,\mathcal{I}_n(\underline{\alpha}_d;{\bf \underline\tau}_d)\prod_{k=1}^nd\tau_k
\end{align*}
where
\begin{align}\label{eq:I_n}
\mathcal{I}_n(\underline{\alpha}_d;{\bf \underline\tau}_d)&=\int_0^\pi d \theta_{1,1}\cdots\int_0^\pi d \theta_{1,n+1}\cdots \int_0^\pi d \theta_{d-2,1}\cdots\int_0^\pi d \theta_{d-2,n+1} \int_0^{2\pi}d \phi_{1}\cdots\int_0^{2\pi} d \phi_{n+1}\notag\\
&\quad\cdot \prod_{k=1}^{n+1}\Bigg\{\exp\left\{ic\tau_k<\underline{\alpha}_d,{\bf \underline{V}}_k> \right\}\frac{\Gamma(d/2)}{2\pi^{d/2}}\sin\theta_{1,k}^{d-2}\cdot\cdot\cdot\sin\theta_{d-2,k}\Bigg\}.
\end{align}
It is known that the integral $\mathcal{I}_n(\underline{\alpha}_d;{\bf \underline\tau}_d)$ (see formula (2.5) in De Gregorio and Orsingher, 2012) is equal to
\begin{equation}\label{intangle}
\mathcal{I}_n(\underline{\alpha}_d;{\bf \underline\tau}_d)=\left\{2^{\frac d2-1}\Gamma\left(\frac d2\right)\right\}^{n+1}\prod_{k=1}^{n+1}\frac{J_{\frac d2-1}(c\tau_k||\underline{\alpha}_d||)}{(c\tau_k||\underline{\alpha}_d||)^{\frac d2-1}}
\end{equation}
and this leads to \eqref{eq:cf}.

Now, by inverting the characteristic function \eqref{eq:cf}, we are able to show that the 
density of the process $\{\underline{\bf X}_d(t),t>0\},$ is given by \eqref{eq:density}. Let us denote by 

$$\Theta=\left\{(\theta_1,...,\theta_{d-2},\phi)\in\mathbb{R}^{d-1}: \theta_i\in [0,\pi],\phi\in[0,2\pi],\, i=1,...,d-2\right\}$$
and by $\underline{v}_d$ the vector
$$\underline{v}_d=\left(
 \begin{array}{l} 
  \cos\theta_{1}\\
      \sin\theta_{1}\cos\theta_{2}\\
      ...\\
       \sin\theta_{1}\sin\theta_{2}\cdot\cdot\cdot\sin\theta_{d-2}\cos\phi \\
      \sin\theta_{1}\sin\theta_{2}\cdot\cdot\cdot\sin\theta_{d-2}\sin\phi 
   \end{array}    
    \right).$$
Therefore, by inverting the characteristic function \eqref{eq:cf} and by passing to the (hyper)spherical coordinates, we have, for $\underline{{\bf x}}_d\in\mathcal B_{ct}^d$, that
\begin{align*}
p_n(\underline{\bf x}_d,t)
&=\frac{1}{(2\pi)^d}\int_{\mathbb{R}^d}e^{-i<\underline{\alpha}_d,\underline{\bf x}_d>}\mathcal F_n(\underline{\bf \alpha}_d)d\alpha_1\cdots d\alpha_d\notag\\
&=\frac{1}{(2\pi)^d}\int_0^\infty \rho^{d-1}d\rho\int_{\Theta} e^{-i\rho<\underline{v}_d,\underline{\bf x}_d>}d\mathbb {S}_1^{d-1}\notag\\
&\quad\cdot \left\{2^{\frac d2-1}\Gamma\left(\frac d2\right)\right\}^{n+1}\int_{S_n}f({\bf \underline\tau}_n; \mathbf{\underline{a}}_{n},\mathbf{\underline{b}}_{n},t)\prod_{k=1}^{n+1}\left\{\frac{J_{\frac d2-1}(c\tau_k\rho)}{(c\tau_k\rho)^{\frac d2-1}} \right\}\prod_{k=1}^{n}d\tau_k.
\end{align*}
By means of formula (2.12) in De Gregorio and Orsingher (2012) 
\begin{equation}\label{eq: (2.12) DGO12}
\int_{\Theta} e^{-i\rho<\underline{v}_d,\underline{\bf x}_d>}d\mathbb {S}_1^{d-1}=(2\pi)^{\frac d2}\frac{J_{\frac d2-1}(\rho ||\underline{\bf x}_d||)}{(\rho ||\underline{\bf x}_d||)^{\frac d2-1}},
\end{equation}
and this concludes the proof.
\end{proof}

\begin{remark}
Actually, it is possible to obtain the density function of \eqref{eq:definition} also if $\underline{\tau}_n$ possesses an arbitrary density $g(\underline{\tau}_n;t)$ on the simplex $S_n$. In this case, by means of the same arguments used in the proof of Theorem \ref{th1}, we can check that the density function of \eqref{eq:definition} is given by the expression \eqref{eq:density} where in place of $f({\bf \underline\tau}_n; \mathbf{\underline{a}}_{n},\mathbf{\underline{b}}_{n},t)$ appears $g(\underline{\tau}_n;t)$.
\end{remark}

\begin{remark}\label{remis}The random process $\{\underline{\bf X}_d(t),t>0\}$ represents an isotropic random motion. The density function $p_n(\underline{{\bf x}}_d,t)$ is rotationally invariant and it depends on the distance $||\underline{{\bf x}}_d||$. Then we can write $p_n(\underline{{\bf x}}_d,t)=p_n(||\underline{{\bf x}}_d||,t)$. Furthermore, as consequence of the isotropy, the distribution of the radial process $\{R_d(t),t>0\}$, where $R_d(t)=||\underline{\bf X}_d(t)||$,  becomes
\begin{equation}
r^{d-1}p_n(r,t)\text{meas}(\mathbb{S}_{1}^{d-1}),\quad 0<r<ct.
\end{equation}
\end{remark}

\subsection{Random walks with two steps}
In general it is not possible to calculate explicetely the density function \eqref{eq:density}. Nevertheless some results can be obtained by setting $n=1$. In other words we consider a random motion $\{\underline{\bf X}_d(t),t>0\}$, which, at time $t>0$, has performed one deviation (or two displacements). In this case the generalized Dirichlet distribution \eqref{eq:gdd} becomes a Beta distribution with parameters $a_1$ and $b_1$. The next result represents a generalization of Theorem 2.3 in Orsingher and De Gregorio (2007).

\begin{theorem}
For $n=1$, we have that
\begin{align}\label{eq:densityonestep}
p_1(\underline{\bf x}_d,t)
&=\frac{1}{2^{d-2}\pi^{\frac{d+1}{2}}} \frac{\Gamma\left(\frac d2\right)^2}{\Gamma(\frac d2-\frac 12)}\frac{\Gamma\left(
a_1+b_1\right)}{\Gamma\left(a_1\right)\Gamma\left(b_1\right)}\frac{1}{t^{a_1+b_1-1}}\frac{(c^2t^2-\ ||\underline{\bf x}_d||^2)^{\frac{d-3}{2}}
}{||\underline{\bf x}_d||^{d-2}c^{2d-4}}\notag\\
&\quad\cdot\int_{\frac{t}{2}-\frac{ ||\underline{\bf x}_d||}{2c}}^{\frac{t}{2}+\frac{ ||\underline{\bf x}_d||}{2c}}\tau_1^{a_1-d+1}(t-\tau_1)^{b_1-d+1}[4c^2\tau_1(t-\tau_1)-c^2t^2+||\underline{\bf x}_d||^2]^{\frac{d-3}{2}}
d\tau_1
\end{align}
with $\underline{\bf x}_d\in\mathcal{B}_{ct}^d$.
\end{theorem}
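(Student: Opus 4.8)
The plan is to specialize Theorem \ref{th1} to $n=1$ and reduce the resulting double integral to a single integral over $\tau_1$ by evaluating the inner integral in $\rho$ in closed form. For $n=1$ the generalized Dirichlet law \eqref{eq:gdd} collapses to the Beta density
$$f(\tau_1;a_1,b_1,t)=\frac{1}{t^{a_1+b_1-1}}\frac{\Gamma(a_1+b_1)}{\Gamma(a_1)\Gamma(b_1)}\,\tau_1^{a_1-1}(t-\tau_1)^{b_1-1},\qquad 0<\tau_1<t,$$
and in \eqref{eq:density} the product over $k=1,2$ involves $c\tau_1$ and $c\tau_2=c(t-\tau_1)$. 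Writing $\nu=\tfrac d2-1$ and $r=||\underline{\bf x}_d||$, I would first count the power of $\rho$: the factor $\rho^{d/2}$ from \eqref{eq:density} together with the two denominators $(c\tau_1\rho)^{-\nu}$ and $(c\tau_2\rho)^{-\nu}$ leaves $\rho^{\frac d2-2\nu}=\rho^{1-\nu}$, so the inner integral is exactly
$$(c\tau_1)^{-\nu}(c\tau_2)^{-\nu}\int_0^\infty \rho^{1-\nu}J_\nu(r\rho)\,J_\nu(c\tau_1\rho)\,J_\nu(c\tau_2\rho)\,d\rho,$$
a product of three Bessel functions of equal order $\nu$ with the critical exponent $1-\nu$.

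The central step is to invoke the classical closed form for the integral of a product of three Bessel functions of equal order: for sides $a,b,e>0$ and $\mathrm{Re}\,\nu>-\tfrac12$,
$$\int_0^\infty \rho^{1-\nu}J_\nu(a\rho)J_\nu(b\rho)J_\nu(e\rho)\,d\rho=\frac{2^{\nu-1}}{\sqrt\pi\,\Gamma(\nu+\frac12)}\,(abe)^{-\nu}\,\Delta^{2\nu-1}$$
whenever $a,b,e$ are the sides of a (non-degenerate) triangle of area $\Delta$, the integral vanishing otherwise. Here $\nu=\tfrac d2-1\ge 0$ for $d\ge2$, so the formula applies with $a=r$, $b=c\tau_1$, $e=c\tau_2$, and $2\nu-1=d-3$. (As a check, for $d=3$, i.e. $\nu=\tfrac12$, the three $J_{1/2}$ reduce to sines and the formula gives $\Delta^{0}$, consistent with the uniform density in the admissible annulus.) The vanishing outside the triangle region is precisely what confines the $\tau_1$-integration: the triangle inequalities $|c\tau_1-c\tau_2|<r<c\tau_1+c\tau_2$ become $c|2\tau_1-t|<r<ct$, i.e. $\tfrac t2-\tfrac{r}{2c}<\tau_1<\tfrac t2+\tfrac{r}{2c}$, which are the limits appearing in \eqref{eq:densityonestep}.

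It then remains to rewrite $\Delta^{2\nu-1}$ through Heron's formula. With sides $r,c\tau_1,c\tau_2$ and $c\tau_1+c\tau_2=ct$ one has
$$16\Delta^2=(c^2t^2-r^2)\bigl[\,r^2-c^2(2\tau_1-t)^2\,\bigr]=(c^2t^2-r^2)\bigl[\,4c^2\tau_1(t-\tau_1)-c^2t^2+r^2\,\bigr],$$
so that $\Delta^{2\nu-1}=\Delta^{d-3}$ produces exactly the two factors $(c^2t^2-r^2)^{\frac{d-3}{2}}$ and $[4c^2\tau_1(t-\tau_1)-c^2t^2+r^2]^{\frac{d-3}{2}}$ of \eqref{eq:densityonestep}. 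Using $2\nu=d-2$, the collected prefactor $(c\tau_1)^{-\nu}(c\tau_2)^{-\nu}(abe)^{-\nu}$ becomes $r^{-\nu}c^{-(2d-4)}\tau_1^{-(d-2)}(t-\tau_1)^{-(d-2)}$, which combines with $\tau_1^{a_1-1}(t-\tau_1)^{b_1-1}$ from the Beta density to give the integrand $\tau_1^{a_1-d+1}(t-\tau_1)^{b_1-d+1}$. Finally I would collect all constants — the factor $\{2^{\frac d2-1}\Gamma(\frac d2)\}^2/[(2\pi)^{\frac d2}r^{\frac d2-1}]$ in front of \eqref{eq:density}, the $2^{\nu-1}/[\sqrt\pi\,\Gamma(\nu+\frac12)]$ from the Bessel formula, the $16^{-(d-3)/2}$ from Heron's identity, and the Beta normalization — and check that they reduce to $2^{-(d-2)}\pi^{-\frac{d+1}{2}}\Gamma(\frac d2)^2/\Gamma(\frac d2-\frac12)$ times the remaining $a_1,b_1,t,c,r$ factors of \eqref{eq:densityonestep}.

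The main obstacle is twofold. First, the exact evaluation of the three-Bessel integral, both its value and the triangle support condition, is the real content of the proof; everything else is algebraic bookkeeping. Second, since $\rho^{1-\nu}J_\nu(r\rho)J_\nu(c\tau_1\rho)J_\nu(c\tau_2\rho)$ is only conditionally integrable at infinity, interchanging the $\rho$- and $\tau_1$-integrations and applying the closed form needs justification (e.g. by an Abel regularization, or by first establishing the formula as the conditional density of the sum $c\tau_1{\bf\underline V}_1+c\tau_2{\bf\underline V}_2$ of two independent vectors uniform on spheres of radii $c\tau_1,c\tau_2$ and then integrating against the Beta law). I would flag this convergence point explicitly and otherwise proceed as above.
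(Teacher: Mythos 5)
Your proposal is correct and follows essentially the same route as the paper: the paper's proof is a pointer to Theorem 2.3 and formula (2.26) of Orsingher and De Gregorio (2007), and that omitted calculation is exactly what you reconstruct — specializing Theorem \ref{th1} to $n=1$, reducing to $\int_0^\infty \rho^{1-\nu}J_\nu(||\underline{\bf x}_d||\rho)J_\nu(c\tau_1\rho)J_\nu(c(t-\tau_1)\rho)\,d\rho$ with $\nu=\tfrac d2-1$, and evaluating it by the classical three-Bessel triangle-area formula, whose support condition gives the limits $\tfrac t2\pm\tfrac{||\underline{\bf x}_d||}{2c}$. Your Heron's-formula reduction and constant bookkeeping check out (the powers of $2$, $\pi$, $c$ and $||\underline{\bf x}_d||$ all collapse to the stated prefactor), and flagging the interchange/conditional-convergence point is a sensible addition rather than a deviation.
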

\begin{proof} The result \eqref{eq:densityonestep} is derived by using the same arguments of the proof of Theorem 2.3 and formula (2.26) in Orsingher and De Gregorio (2007). Then we omit the calculations.

\end{proof}

\begin{corollary}
 If $a_1=b_1=a$, we obtain that
\begin{align*}
p_1(\underline{\bf x}_d,t)
=\frac{1}{2^{2a-d+1}\pi^{\frac{d}{2}}}\frac{\Gamma\left(\frac d2\right)\Gamma\left(
2a\right)}{\Gamma\left(a\right)^2}\frac{(c^2t^2-\ ||\underline{\bf x}_d||^2)^{\frac{d-3}{2}}
}{(ct)^{2d-3}}{}_2F_1\left(d-1-a,\frac12,\frac d2;\frac{||\underline{\bf x}_d||^2}{c^2t^2}\right),\end{align*}
where $\underline{\bf x}_d\in \mathcal{B}_{ct}^d,$ and
the hypergeometric function ${}_2F_1(\alpha,\beta,\gamma;z)$ is defined as follows
$$
{}_2F_1(\alpha,\beta,\gamma;z)=\frac{\Gamma(\gamma)}{\Gamma(\beta)\Gamma(\gamma-\beta)}\int_0^1t^{\beta-1}
(1-t)^{\gamma-\beta-1}(1-tz)^{-\alpha} dt,
$$
for $Re \,\gamma >Re\, \beta >0,|z|<1.$ 

\end{corollary}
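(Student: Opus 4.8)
The plan is to specialize the expression \eqref{eq:densityonestep} to $a_1=b_1=a$ and to recognize the resulting one-dimensional integral as an Euler-type representation of the Gauss hypergeometric function. Setting $a_1=b_1=a$, the prefactor immediately acquires the factor $\frac{\Gamma(2a)}{\Gamma(a)^2}\,t^{-(2a-1)}$ alongside the already present $\frac{\Gamma(d/2)^2}{\Gamma(d/2-1/2)}$ and the powers of $||\underline{\bf x}_d||$ and $c$, while the integrand collapses to $\tau_1^{a-d+1}(t-\tau_1)^{a-d+1}[4c^2\tau_1(t-\tau_1)-c^2t^2+||\underline{\bf x}_d||^2]^{(d-3)/2}$, which is symmetric under $\tau_1\mapsto t-\tau_1$.

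First I would exploit this symmetry through the centering substitution $\tau_1=\frac t2+s$, with $s\in(-\frac{||\underline{\bf x}_d||}{2c},\frac{||\underline{\bf x}_d||}{2c})$. Then $\tau_1(t-\tau_1)=\frac{t^2}4-s^2$, so that $4c^2\tau_1(t-\tau_1)-c^2t^2+||\underline{\bf x}_d||^2=||\underline{\bf x}_d||^2-4c^2s^2$ and $\tau_1^{a-d+1}(t-\tau_1)^{a-d+1}=(\frac{t^2}4-s^2)^{a-d+1}$. The integrand is now even in $s$, both bases are positive on the integration range because $||\underline{\bf x}_d||<ct$, and the integral reduces to $2\int_0^{||\underline{\bf x}_d||/(2c)}(\frac{t^2}4-s^2)^{a-d+1}(||\underline{\bf x}_d||^2-4c^2s^2)^{(d-3)/2}\,ds$.

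Next I would normalize the interval by the scaling $u=\frac{2cs}{||\underline{\bf x}_d||}\in(0,1)$, which factors out $||\underline{\bf x}_d||^{d-3}(1-u^2)^{(d-3)/2}$ from the second bracket and $(\frac{t^2}4)^{a-d+1}(1-\frac{||\underline{\bf x}_d||^2}{c^2t^2}u^2)^{a-d+1}$ from the first, leaving $\int_0^1(1-u^2)^{(d-3)/2}(1-\frac{||\underline{\bf x}_d||^2}{c^2t^2}u^2)^{a-d+1}\,du$. The substitution $v=u^2$ then turns this into $\frac12\int_0^1 v^{-1/2}(1-v)^{(d-3)/2}(1-zv)^{a-d+1}\,dv$ with $z=||\underline{\bf x}_d||^2/(c^2t^2)$, which is exactly the Euler integral quoted in the statement with $\beta=\frac12$, $\gamma-\beta-1=\frac{d-3}2$ (so $\gamma=\frac d2$) and $-\alpha=a-d+1$ (so $\alpha=d-1-a$). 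Hence the integral equals $\frac{\sqrt\pi\,\Gamma(\frac d2-\frac12)}{2\Gamma(\frac d2)}\,{}_2F_1(d-1-a,\frac12,\frac d2;z)$.

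The final step is purely bookkeeping: multiplying this value by the simplified prefactor, the factor $||\underline{\bf x}_d||^{d-2}$ cancels, the combination $\frac{\Gamma(d/2)^2}{\Gamma(d/2-1/2)}\cdot\frac{\Gamma(d/2-1/2)}{\Gamma(d/2)}$ collapses to $\Gamma(\frac d2)$ (so no duplication formula is needed), the $\pi$-powers combine to $\pi^{-d/2}$, and collecting the powers of $2$, $c$ and $t$ produces $2^{-(2a-d+1)}$ and $(ct)^{-(2d-3)}$, which yields precisely the asserted formula. I expect the main obstacle to be this careful tracking of the numerous constants together with pinning down the exact hypergeometric parameters after the two substitutions; the only analytic input beyond elementary manipulation is the Euler representation of ${}_2F_1$ already recalled in the statement.
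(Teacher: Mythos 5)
Your proof is correct and follows essentially the same route as the paper: the paper performs the substitutions $\tau_1-\tfrac t2=y$, $2cy=z\|\underline{\bf x}_d\|$ and then $z^2=w$ (your $s$, $u$, $v$), arriving at the same Euler integral representation of ${}_2F_1\left(d-1-a,\tfrac12,\tfrac d2;\|\underline{\bf x}_d\|^2/c^2t^2\right)$. The only cosmetic difference is that the paper keeps $a_1,b_1$ general through the first two substitutions (its formula \eqref{eq:altform}) before setting $a_1=b_1=a$, whereas you specialize at the outset; your constant bookkeeping checks out exactly.
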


\begin{proof}

We can write \eqref{eq:densityonestep} in the following alternative form
\begin{align}\label{eq:altform}
p_1(\underline{\bf x}_d,t)
&=\frac{1}{2^{d-2}\pi^{\frac{d+1}{2}}} \frac{\Gamma\left(\frac d2\right)^2}{\Gamma(\frac d2-\frac 12)}\frac{\Gamma\left(
a_1+b_1\right)}{\Gamma\left(a_1\right)\Gamma\left(b_1\right)}\frac{1}{t^{a_1+b_1-1}}\frac{(c^2t^2-\ ||\underline{\bf x}_d||^2)^{\frac{d-3}{2}}
}{c^{2d-4}}\\
&\quad\cdot\frac{1}{(2c)^{a_1+b_1-2d+3}}\int_{-1}^{1}(ct+||\underline{\bf x}_d||z)^{a_1-d+1}(ct-||\underline{\bf x}_d||z)^{b_1-d+1}(1-z^2)^{\frac{d-3}{2}}\notag
dz
\end{align}
In the last step above, we applied the successive substitutions
$\tau_1-\frac{t}{2}=y$ and $2cy=z||\underline{\bf x}_d||.$ 
From \eqref{eq:altform}, if $a_1=b_1=a$, by means of the position $z^2=w$, we obtain that
\begin{align}\label{eq:altform2}
p_1(\underline{\bf x}_d,t)
&=\frac{1}{2^{2a-d+1}\pi^{\frac{d+1}{2}}} \frac{\Gamma\left(\frac d2\right)^2}{\Gamma(\frac d2-\frac 12)}\frac{\Gamma\left(
2a\right)}{\Gamma\left(a\right)^2}\frac{(c^2t^2-\ ||\underline{\bf x}_d||^2)^{\frac{d-3}{2}}
}{(ct)^{2d-3}}\\
&\quad\cdot\int_{0}^{1}w^{-\frac12}\left(1-\frac{||\underline{\bf x}_d||^2}{c^2t^2}w\right)^{a-d+1}(1-w)^{\frac{d-3}{2}}
dz\notag\\
&=\frac{1}{2^{2a-d+1}\pi^{\frac{d}{2}}}\frac{\Gamma\left(\frac d2\right)\Gamma\left(
2a\right)}{\Gamma\left(a\right)^2}\frac{(c^2t^2-\ ||\underline{\bf x}_d||^2)^{\frac{d-3}{2}}
}{(ct)^{2d-3}}{}_2F_1\left(d-1-a,\frac12,\frac d2;\frac{||\underline{\bf x}_d||^2}{c^2t^2}\right)\notag
\end{align}
with $ \underline{\bf x}_d\in \mathcal{B}_{ct}^d$.
\end{proof}

\begin{remark}
From \eqref{eq:altform2}, for $a=d-1$ we derive the following simplified density function
\begin{equation*}
p_1(\underline{\bf x}_d,t)=\frac{1}{2^{d-1}\pi^{\frac{d+1}{2}}} \frac{\Gamma\left(\frac d2\right)\Gamma(2(d-1))}{(\Gamma(d-1))^2}\frac{(c^2t^2-\ ||\underline{\bf x}_d||^2)^{\frac{d-3}{2}}}{(ct)^{2d-3}},
\end{equation*}
while for $a=d$ we obtain that
\begin{equation*}
p_1(\underline{\bf x}_d,t)=\frac{1}{2^{d+1}\pi^{\frac{d+1}{2}}} \frac{\Gamma\left(\frac d2\right)\Gamma(2d)}{(\Gamma(d))^2}\frac{(c^2t^2- ||\underline{\bf x}_d||^2)^{\frac{d-3}{2}}(c^2t^2-\frac1d||\underline{\bf x}_d||^2)}{(ct)^{2d-1}}.
\end{equation*}
Furthermore, since ${}_2F_1\left(\frac12,\frac12,\frac 32;z^2\right)=\arcsin z/z$, (see Lebedev, 1972, pag.259, formula (9.8.5)) for $d=3$ and $a=3/2$, we have that
$$p_1(\underline{\bf x}_3,t)=\frac{2}{\pi^2(ct)^2}\frac{\arcsin\left(\frac{||\underline{\bf x}_3||}{ct}\right)}{||\underline{\bf x}_3||}.$$
\end{remark}

It is worth to point out that $\mathbb R^3$ represents a suitable environment for analyzing the random flight (at least for $n=1$). Indeed, for $d=3$ the formula \eqref{eq:densityonestep} becomes 
\begin{align}\label{eq:densityd3n1}
p_1(\underline{\bf x}_3,t)
=\frac{1}{2^3\pi} \frac{1}{c^2||\underline{\bf x}_3||}\frac{\Gamma\left(
a_1+b_1\right)}{\Gamma\left(a_1\right)\Gamma\left(b_1\right)}\frac{1}{t^{a_1+b_1-1}}\int_{\frac{t}{2}-\frac{ ||\underline{\bf x}_3||}{2c}}^{\frac{t}{2}+\frac{ ||\underline{\bf x}_3||}{2c}}\tau_1^{a_1-2}(t-\tau_1)^{b_1-2}
d\tau_1,
\end{align}
where $\underline{\bf x}_3\in\mathcal{B}_{ct}^3$.
In some cases, we are able to obtain the explicit form for the density function \eqref{eq:densityd3n1}. For $a_1\neq 1$ and $b_1=2$, we get
\begin{align*}
p_1(\underline{\bf x}_3,t)
&=\frac{1}{2\pi(2ct)^{a_1+1}||\underline{\bf x}_3||}\frac{a_1(a_1+1)}{a_1-1}[(ct+||\underline{\bf x}_3||)^{a_1-1} -(ct-||\underline{\bf x}_3||)^{a_1-1}]
\end{align*}
and if $a_1=2$ the above result allows to the uniform distribution inside the three-dimensional ball with radius $ct$, that is $$p_1(\underline{\bf x}_3,t)=\frac{1}{2^2\pi}\frac{3}{(ct)^3}.$$ By setting $a_1=1$ and $b_1=2$ in \eqref{eq:densityd3n1}, we derive that
\begin{align*}
p_1(\underline{\bf x}_3,t)
&=\frac{1}{\pi(2ct)^2 ||\underline{\bf x}_3||}\log\left(\frac{ct + ||\underline{\bf x}_3||}{ct - ||\underline{\bf x}_3||}\right),
\end{align*}
which coincides with the distribution obtained in Orsingher and De Gregorio (2007) (see (2.27b) in Theorem 2.4) for uniformly distributed intertimes. 

We observe that \eqref{eq:densityd3n1}, can be also expressed as follows
\begin{align}\label{eq:densbeta}
p_1(\underline{\bf x}_3,t)&=\frac{1}{2^3\pi} \frac{1}{c^2t^2||\underline{\bf x}_3||}\frac{1}{B(a_1,b_1)}\notag\\
&\quad\cdot \left[ B\left(\frac{1}{2}+\frac{ ||\underline{\bf x}_3||}{2ct};a_1-1,b_1-1\right)-B\left(\frac{1}{2}-\frac{ ||\underline{\bf x}_3||}{2ct};a_1-1,b_1-1\right)\right]
\end{align}
where $B(x;a,b)=\int_0^xz^{a-1}(1-z)^{b-1}dz$ represents the incomplete Beta function, whereas $B(a,b)$ is the standard Beta function, for $a,b>0$. It is well-known that the incomplete beta function can be expressed in terms of distribution function of a binomial random variable. In other words, the following relationship holds
\begin{equation}\label{eq:relincombeta}
\frac{B(x;a,b)}{B(a,b)}=\sum_{k=a}^{a+b-1}\binom{a+b-1}{k}x^k(1-x)^{a+b-1-k},\quad a,b\in\mathbb N.
\end{equation}
If $a_1-1,b_1-1\in \mathbb N$, we have that \eqref{eq:relincombeta} turns out 
\begin{align*}
p_1(\underline{\bf x}_3,t)&=\frac{1}{2^3\pi} \frac{1}{c^2t^2||\underline{\bf x}_3||}\frac{(a_1+b_1-1)(a_1+b_1-2)}{(a_1-1)(b_1-1)}\sum_{k=a_1-1}^{a_1+b_1-3}\binom{a_1+b_1-3}{k}\notag\\
&\quad\cdot\left[ \left(\frac{1}{2}+\frac{ ||\underline{\bf x}_3||}{2ct}\right)^k\left(\frac{1}{2}-\frac{ ||\underline{\bf x}_3||}{2ct}\right)^{a_1+b_1-3-k}-\left(\frac{1}{2}-\frac{ ||\underline{\bf x}_3||}{2ct}\right)^i\left(\frac{1}{2}+\frac{ ||\underline{\bf x}_3||}{2ct}\right)^{a_1+b_1-3-k}\right]\\
&=\frac{1}{2^{a_1+b_1}\pi} \frac{1}{(ct)^{a_1+b_1-1}||\underline{\bf x}_3||}\frac{(a_1+b_1-1)(a_1+b_1-2)}{(a_1-1)(b_1-1)}\sum_{k=a_1-1}^{a_1+b_1-3}\binom{a_1+b_1-3}{k}\notag\\
&\quad\cdot\left[ \left(ct+||\underline{\bf x}_3||\right)^k\left(ct- ||\underline{\bf x}_3||\right)^{a_1+b_1-3-k}-\left(ct- ||\underline{\bf x}_3||\right)^k\left(ct+ ||\underline{\bf x}_3||\right)^{a_1+b_1-3-k}\right],
\end{align*}
where $\underline{\bf x}_3\in\mathcal{B}_{ct}^3$. 
\section{Explicit probability density functions}\label{sec3}

From Theorem \ref{th1} emerges that in order to explicit the density function of $\{\underline{\bf X}_d(t),t>0\},$ we should be able to calculate the following integral
\begin{align}\label{eq:mainint}
&\int_{S_n}f({\bf \underline\tau}_n; \mathbf{\underline{a}}_{n},\mathbf{\underline{b}}_{n},t)\prod_{k=1}^{n+1}\left\{\frac{J_{\frac d2-1}(c\tau_k\rho)}{(c\tau_k\rho)^{\frac d2-1}} \right\}\prod_{k=1}^{n}d\tau_k
\end{align}
appearing in \eqref{eq:density} or equivalently we should calculate the $n$-fold integral appearing in the characteristic function \eqref{eq:cf}.

In general it is not possible to obtain the exact value of \eqref{eq:mainint} (or \eqref{eq:cf}). Nevertheless, for some values of the parametric vectors $\mathbf{\underline{a}}_{n}$ and $\mathbf{\underline{b}}_{n}$ of the generalized Dirichlet disitribution \eqref{eq:gdd}, the integral \eqref{eq:mainint} (or \eqref{eq:cf}) can be worked out. Therefore in this section we study the random walks derived from the suitable choices of the parameters $\mathbf{\underline{a}}_{n}$ and $\mathbf{\underline{b}}_{n}$. In particular we consider two different families of  ``solvable random walks". In our context with the terminology ``solvable random walks", we mean random motions, with a fixed number of steps $n+1$, with isotropic density function of the following type
\begin{equation}\label{eq:introd2}
p_n(\underline{\bf x}_d,t)=A(c^2t^2-||\underline{\bf x}_d||^2)^{b},
\end{equation}
where $b$ is a constant depending on $n$ and $d$, while $A$ is the necessary normalizing factor (depending on $t$). Clearly the solvable random walks represent a sub-family of the general class $\{\underline{\bf X}_d(t),t>0\}$.

We will use in the proof below the same approach developed in De Gregorio and Orsingher (2012).  For this reason we need the following formulae

\begin{equation}\label{eq:int1}
\int_0^ax^\mu(a-x)^{\nu+i} J_\mu(x)J_\nu(a-x)dx=
\frac{\Gamma(\mu+\frac12)\Gamma(\nu+i+\frac12)}{\sqrt{2\pi}\Gamma(\mu+\nu+i+1)}a^{\mu+\nu+i+\frac12}J_{\mu+\nu+\frac12}(a),
\end{equation}
where $i=0,1$,  and $Re\,\mu>-\frac12$, $Re\,\nu>-\frac{i+1}{2}$ (see Gradshteyn and Ryzhik, 1980, page 743, formulae 6.581(3)-(4)), and

 \begin{equation}\label{eq:int2}
\int_0^a\frac{J_\mu(x)J_\nu(a-x)}{x(a-x)^i}dx=\left(\frac1\mu+\frac{i}{\nu}\right)\frac{J_{\mu+\nu}(a)}{a^i},
\end{equation}
where $i=0,1$,  and $Re\,\mu>0$, $Re\,\nu>-(i+1)$ (see Gradshteyn and Ryzhik, 1980, page 678, formulae 6.533(1)-(2)).

\subsection{Solvable processes of first type}\label{subsec:solft}

Let us consider a random motion with random position \eqref{eq:definition}, and fix a time interval $\tau_j,j\geq 1,$ where the process changes the distribution of the displacements. This means that $\underline{\tau}_n$ has density function $GD(\underline{\bf a}_n,\underline{\bf b}_n)$ with parameters $\underline{\bf a}_n=(a_1,...,a_k,...,a_n)$ and $\underline{\bf b}_n=(b_1,...,b_k,...,b_n)$ defined respectively as follows:
\begin{itemize}
\item if $n\leq j$, we assume that $GD(\underline{\bf a}_n,\underline{\bf b}_n)$ possesses parameters equal to
\begin{align}\label{eq:param3}
{\bf a}_n=(d-1,...,d-1),\quad {\bf b}_n=(1,...,1, d-1),
\end{align}
with $d\geq 3$;
\item if $n>j$, one has that 
\begin{align}
&a_k=\begin{cases}\label{eq:param1}
d-1,& k\in\{1,...,j\},\\
\frac d2-1,& k\in\{j+1,...,n\},
\end{cases}\\
&b_k=
\begin{cases}\label{eq:param2}
1,&k\in\{1,...,n-1\}\setminus \{j\},\\
(n-j+1)(\frac d2-1)+i+h+1,&k=j,\\
\frac d2-i,&k=n,
\end{cases}
\end{align}
where $i,h\in\{0,1\}$ and $d\geq 3$.
\end{itemize} 
We indicate this process by $\underline{\bf X}_d^{h,i,j}=\{\underline{\bf X}_d^{h,i,j}(t),t>0\}$ which is a random walk with  ``switching phase". In other words,  we assume that till a fixed step the displacements of the motion follows a standard Dirichlet distribution \eqref{eq:param3}. When the number of deviations is greater than $j$ the random walks changes phase and the assumptions \eqref{eq:param1} and \eqref{eq:param2} fulfills.

The random motions $\underline{\bf X}^{h,i,j}_d$ represent a class of random walks where each combination of the indexes $h,i,j$ defines a different process.  

\begin{theorem}\label{teo:4ident}
Fixed $j\in\mathbb N$, we have that the random motions $\underline{\bf X}_d^{0,0,j}$, $\underline{\bf X}_d^{1,0,j}$, $\underline{\bf X}_d^{0,1,j}$, $\underline{\bf X}_d^{1,1,j}$ are identically distributed.
\end{theorem}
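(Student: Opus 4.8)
The plan is to prove the four processes coincide in law by showing that, for every fixed $n$, their conditional characteristic functions \eqref{eq:cf} are identical; since the indices $h,i$ enter only the displacement law and not the distribution of $\mathcal N(t)$, equality of the conditional densities $p_n$ for each $n$ yields equality of the laws of $\underline{\bf X}_d^{h,i,j}(t)$. When $n\le j$ the parameters \eqref{eq:param3} carry no dependence on $h,i$, so the four conditional laws trivially agree, and only the regime $n>j$ governed by \eqref{eq:param1}--\eqref{eq:param2} requires work. Writing $\nu=\tfrac d2-1$, I would substitute the parameter values into the master integral \eqref{eq:mainint} and rescale $u_k=c\rho\tau_k$, so that with $\sigma_k=a-\sum_{l=1}^k u_l$ and $a=c\rho t$ the whole integral becomes $C(\underline{\bf a}_n,\underline{\bf b}_n,t)\,(c\rho)^{-E}\widetilde I$, where $E=\sum_{k=1}^n(a_k+b_k-1)$ and $\widetilde I$ is a pure Bessel convolution over the simplex. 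A direct bookkeeping of $E$ shows that $i$ cancels inside the sum while $h$ contributes additively, so that $E_1:=E(h{=}1)=E_0+1$ with $E_0:=E(h{=}0)$.

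Next I would evaluate $\widetilde I$ in two stages, exactly along the lines of De Gregorio and Orsingher (2012). Collecting the powers of $u_k$ and $\sigma_k$ produced by the parameters, the second-phase variables $u_{j+1},\dots,u_{n+1}$ enter as a convolution of factors $J_\nu(u_k)/u_k$ together with a final factor $u_{n+1}^{-i}J_\nu(u_{n+1})$; iterating \eqref{eq:int2} (its $i=0$ branch when $i=0$, its $i=1$ branch when $i=1$) collapses this block to $\mathrm{const}_i\cdot\sigma_j^{-i}J_{(n-j+1)\nu}(\sigma_j)$. Multiplying by the weight $\sigma_j^{(n-j+1)\nu+i+h}$ coming from $b_j$, the exponent becomes $(n-j+1)\nu+h$ in both cases, so the surviving $i$-dependence is only the multiplicative constant $\mathrm{const}_i$; this is precisely what makes $\underline{\bf X}_d^{h,0,j}$ and $\underline{\bf X}_d^{h,1,j}$ agree.

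In the second stage the first-phase variables $u_1,\dots,u_j$ appear as factors $u_k^{\nu}J_\nu(u_k)$ convolved with the reduced block $\sigma_j^{M+h}J_{M}(\sigma_j)$, where $M=(n-j+1)\nu$. Here the role of $h$ is to select the branch of \eqref{eq:int1}: for $h=0$ the block is in the form $\sigma^{M}J_{M}$ (use $i=0$), while for $h=1$ it is in the form $\sigma^{M+1}J_{M}$ (use $i=1$). Iterating \eqref{eq:int1} $j$ times, I would check that the Bessel order increases by $\nu+\tfrac12$ at each step independently of $h$, so that both cases terminate at the same order $\Omega=M+j(\nu+\tfrac12)$, while the power of $a$ is $\Omega$ for $h=0$ and $\Omega+1$ for $h=1$. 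Thus $\widetilde I=\mathrm{const}\cdot a^{\Omega+h}J_\Omega(a)$.

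Finally I would reassemble $\mathcal F_n=\{2^{\nu}\Gamma(\nu+1)\}^{n+1}C\,(c\rho)^{-E}\widetilde I$ and read off the $\rho$-profile. Since $a=c\rho t$, the factor $a^{\Omega+h}$ contributes $\rho^{\Omega+h}$ and $(c\rho)^{-E}$ contributes $\rho^{-E}$; because $E_1-E_0=1$ matches exactly the extra power of $a$ in the $h=1$ case, the net dependence on $\rho$ equals $\rho^{\Omega-E_0}J_\Omega(c\rho t)$ in all four cases, the difference being only fixed powers of $c$ and $t$ absorbed by the normalization $C$. The four characteristic functions therefore coincide up to a $\rho$-independent constant, and since each equals $1$ at $\underline\alpha_d=\underline 0$ (the limit $J_\nu(x)/x^\nu\to 1/\{2^\nu\Gamma(\nu+1)\}$ makes $\mathcal F_n(\underline 0)=1$ automatically), the constant is forced to be the same; by uniqueness of characteristic functions the four conditional laws agree, and the processes are identically distributed. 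I expect the main obstacle to be the careful verification in the two convolution stages that $i$ survives only as a harmless multiplicative constant and that the single extra power of $a$ produced by $h$ is exactly annihilated by the $h$-shift in $E$; making these two cancellations line up is the crux of the argument.
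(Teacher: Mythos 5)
Your proposal is correct and follows essentially the same route as the paper's proof: the same decomposition of the simplex into the three blocks \eqref{eq:dom1}--\eqref{eq:dom3}, iterating \eqref{eq:int2} over the second-phase variables (so that $i$ survives only as a multiplicative constant and a factor $\sigma_j^{-i}$ that is absorbed by the exponent $(n-j+1)(\frac d2-1)+i+h$ from $b_j$), then iterating \eqref{eq:int1} over the switching and first-phase variables with the branch selected by $h$, arriving at the same Bessel order $(n+1)(\frac d2-1)+\frac j2$ as in \eqref{eq:cfgeneral}. Your one deviation --- pinning the final multiplicative constant via $\mathcal F_n(\underline 0)=1$ and $J_\Omega(x)/x^\Omega\to 1/\bigl(2^\Omega\Gamma(\Omega+1)\bigr)$, instead of the paper's explicit verification through the Gamma duplication formulas --- is a legitimate and slightly cleaner way to finish, since the constant is $\rho$-independent and the matching $E_1=E_0+1$ cancellation you verify guarantees the common profile $J_\Omega(ct\rho)/(ct\rho)^\Omega$ in all four cases.
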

\begin{proof} Let us start by calculating the characteristic function $$\mathcal F_n^{h,i,j}(\underline\alpha_d)={\bf E}\left.\left\{e^{i<\underline{\alpha}_d,\underline{\bf X}^{h,i,j}_d>}\right|\mathcal{N}(t)=n\right\}$$ of $\underline{\bf X}_d^{h,i,j}$. 

Under the assumption  \eqref{eq:param3}, the process $\underline{\bf X}_d^{h,i,j}$ does not depend on $h,i$ and $j$ and reduces to the standard case handled in De Gregorio and Orsingher (2012). Hence we focus our attention on the case $j<n$.
 
Under the assumptions \eqref{eq:param1} and \eqref{eq:param2}, the characteristic function \eqref{eq:cf} becomes
\begin{align*}
&\mathcal F_n^{h,i,j}(\underline\alpha_d)\\
&=\left\{2^{\frac d2-1}\Gamma\left(\frac d2\right)\right\}^{n+1}C(\mathbf{\underline{a}}_{n},\mathbf{\underline{b}}_{n},t)\int_{S_{j-1}^1}\prod_{k=1}^{j-1}\left\{\tau_k^{d-2}\frac{J_{\frac d2-1}(c\tau_k||\underline{\alpha}_d||)}{(c\tau_k||\underline{\alpha}_d||)^{\frac d2-1}}\right\}d\tau_1\cdots d\tau_{j-1}\\
&\quad\cdot\int_{S_j^2}\tau_j^{\frac d2-2}(t-\sum_{k=1}^{j}\tau_k)^{(n-j+1)(\frac d2-1)+i+h}\frac{J_{\frac d2-1}(c\tau_j||\underline{\alpha}_d||)}{(c\tau_j||\underline{\alpha}_d||)^{\frac d2-1}}d\tau_j\\\
&\quad\cdot \int_{S_{n-j}^3}\prod_{k=j+1}^{n}\left\{\tau_k^{\frac d2-2}\frac{J_{\frac d2-1}(c\tau_k||\underline{\alpha}_d||)}{(c\tau_k||\underline{\alpha}_d||)^{\frac d2-1}}\right\}(t-\sum_{k=1}^n\tau_k)^{\frac d2-i-1}\frac{J_{\frac d2-1}(c(t-\sum_{k=1}^n\tau_k)||\underline{\alpha}_d||)}{(c(t-\sum_{k=1}^n\tau_k)||\underline{\alpha}_d||)^{\frac d2-1}}d\tau_{j+1}\cdots d\tau_n
\end{align*}
where 
\begin{align*}
&C(\mathbf{\underline{a}}_{n},\mathbf{\underline{b}}_{n},t)\\
&=\frac{1}{t^{2(n+1)\left(\frac d2-1\right)+h+j}}\frac{\Gamma\left(2(n+1)(\frac d2-1)+h+j+1\right)\Gamma\left((n-j+1)(\frac d2-1)+1-i\right)}{\left(\Gamma(d-1)\right)^j\left(\Gamma(\frac d2-1)\right)^{n-j}\Gamma(\frac d2-i)\Gamma\left(2(n-j+1)(\frac d2-1)+h+1\right)}
\end{align*}
and 
\begin{align}
&S_{j-1}^1=\left\{(\tau_1,...,\tau_{j-1})\in \mathbb{R}^{j-1}:0<\tau_k<t-\sum_{i=0}^{k-1}\tau_i,\,k=1,...,j-1\right\},\label{eq:dom1}\\
&S_j^2=\left\{\tau_j\in \mathbb{R}:0<\tau_j<t-\sum_{i=0}^{j-1}\tau_i\right\},\label{eq:dom2}\\
&S_{n-j}^3=\left\{(\tau_{j+1},...,\tau_{n})\in \mathbb{R}^{n-j}:0<\tau_k<t-\sum_{i=0}^{k-1}\tau_i,\,k=j+1,...,n\right\}.\label{eq:dom3}
\end{align}

The first step consists in the calculation of the $(n-j)-$fold integral
\begin{align*}
&I_1(\underline{\tau}_{j})\\
&=\int_{S_{n-j}^3}\prod_{k=j+1}^{n}\left\{\tau_k^{\frac d2-2}\frac{J_{\frac d2-1}(c\tau_k||\underline{\alpha}_d||)}{(c\tau_k||\underline{\alpha}_d||)^{\frac d2-1}}\right\}(t-\sum_{k=1}^n\tau_k)^{\frac d2-i-1}\frac{J_{\frac d2-1}(c(t-\sum_{k=1}^n\tau_k)||\underline{\alpha}_d||)}{(c(t-\sum_{k=1}^n\tau_k)||\underline{\alpha}_d||)^{\frac d2-1}}d\tau_{j+1}\cdots d\tau_n.
\end{align*}
We apply recursively the result \eqref{eq:int2}. Indeed, the first integral with respect to $\tau_n$ becomes
\begin{align*}
&\frac{1}{(c||\underline{\alpha}_d||)^{d-3-i}}\int_0^{t-\sum_{k=1}^{n-1}\tau_k}\frac{J_{\frac d2-1}(c\tau_n||\underline{\alpha}_d||)}{c\tau_n||\underline{\alpha}_d||}\frac{J_{\frac d2-1}(c(t-\sum_{k=1}^n\tau_k)||\underline{\alpha}_d||)}{(c(t-\sum_{k=1}^n\tau_k)||\underline{\alpha}_d||)^i}d\tau_n\\
&=(y=c\tau_n||\underline{\alpha}_d||)\\
&=\frac{1}{(c||\underline{\alpha}_d||)^{d-2-i}}\int_0^{c(t-\sum_{k=1}^{n-1}\tau_k)||\underline{\alpha}_d||}\frac{J_{\frac d2-1}(y)}{y}\frac{J_{\frac d2-1}(c(t-\sum_{k=1}^{n-1}\tau_k)||\underline{\alpha}_d||-y)}{(c(t-\sum_{k=1}^{n-1}\tau_k)||\underline{\alpha}_d||-y)^i}dy\\
&=\frac{1}{(c||\underline{\alpha}_d||)^{d-2-i}}\frac{1+i}{\frac d2-1}\frac{J_{2\left(\frac d2-1\right)}(c(t-\sum_{k=1}^{n-1}\tau_k)||\underline{\alpha}_d||)}{(c(t-\sum_{k=1}^{n-1}\tau_k)||\underline{\alpha}_d||)^i}.
\end{align*}
The second integral with respect to $\tau_{n-1}$ in $I_1(\underline{\tau}_{j})$ reads
\begin{align*}
&\frac{1}{(c||\underline{\alpha}_d||)^{\frac32 d-4-i}}\frac{1+i}{\frac d2-1}\int_0^{t-\sum_{k=1}^{n-2}\tau_k}\frac{J_{\frac d2-1}(c\tau_{n-1}||\underline{\alpha}_d||)}{c\tau_{n-1}||\underline{\alpha}_d||}\frac{J_{2\left(\frac d2-1\right)}(c(t-\sum_{k=1}^{n-1}\tau_k)||\underline{\alpha}_d||)}{(c(t-\sum_{k=1}^{n-1}\tau_k)||\underline{\alpha}_d||)^i}d\tau_{n-1}\\
&=(y=c\tau_n||\underline{\alpha}_d||)\\
&=\frac{1}{(c||\underline{\alpha}_d||)^{\frac32 d-3-i}}\frac{1+i}{\frac d2-1}\int_0^{c(t-\sum_{k=1}^{n-2}\tau_k)||\underline{\alpha}_d||}\frac{J_{\frac d2-1}(y)}{y}\frac{J_{2\left(\frac d2-1\right)}(c(t-\sum_{k=1}^{n-2}\tau_k)||\underline{\alpha}_d||-y)}{(c(t-\sum_{k=1}^{n-2}\tau_k)||\underline{\alpha}_d||-y)^i}dy\\
&=\frac{1}{(c||\underline{\alpha}_d||)^{\frac32 d-3-i}}\frac{(1+i)(2+i)}{2(\frac d2-1)^2}\frac{J_{3\left(\frac d2-1\right)}(c(t-\sum_{k=1}^{n-2}\tau_k)||\underline{\alpha}_d||)}{(c(t-\sum_{k=1}^{n-2}\tau_k)||\underline{\alpha}_d||)^i}.
\end{align*}
 Therefore, by continuing at the same way with the successive integrations we obtain that
$$I_1(\underline{\tau}_j)=\frac{1}{(c||\underline{\alpha}_d||)^{(n-j+1)(\frac d2-1)-i}}\frac{(n-j+i)!/(n-j)!}{(\frac d2-1)^{n-j}}\frac{J_{(n-j+1)\left(\frac d2-1\right)}(c(t-\sum_{k=1}^{j}\tau_k)||\underline{\alpha}_d||)}{(c(t-\sum_{k=1}^{j}\tau_k)||\underline{\alpha}_d||)^i}.$$
Now we work out the following integral 
$$I_2(\underline{\tau}_{j-1})
=\int_{S_j^2}\tau_j^{d-2}(t-\sum_{k=1}^{j}\tau_k)^{(n-j+1)(\frac d2-1)+i+h}\frac{J_{\frac d2-1}(c\tau_j||\underline{\alpha}_d||)}{(c\tau_j||\underline{\alpha}_d||)^{\frac d2-1}}I_1(\underline{\tau}_j)d\tau_j$$
by applying \eqref{eq:int1}. Therefore, we get that
\begin{align*}
I_2(\underline{\tau}_{j-1})
&=\frac{1}{(c||\underline{\alpha}_d||)^{(2(n-j+1)+2)(\frac d2-1)+h+1}}\frac{(n-j+i)!/(n-j)!}{(\frac d2-1)^{n-j}}\int_0^{c(t-\sum_{k=1}^{j-1})||\underline{\alpha}_d||}y^{\frac d2-1}J_{\frac d2-1}(y)\\
&\quad\cdot(c(t-\sum_{k=1}^{j-1}\tau_k)||\underline{\alpha}_d||-y)^{(n-j+1)\left(\frac d2-1\right)+h}J_{(n-j+1)\left(\frac d2-1\right)}(c(t-\sum_{k=1}^{j-1}\tau_k)||\underline{\alpha}_d||-y)dy\\
&=\frac{\frac{(n-j+i)!/(n-j)!}{(\frac d2-1)^{n-j}}}{(c||\underline{\alpha}_d||)^{(2(n-j+1)+2)(\frac d2-1)+h+1}}\frac{\Gamma(\frac{d-1}{2})\Gamma((n-j+1)(\frac d2-1)+h+\frac12)}{\sqrt{2\pi}\Gamma((n-j+2)(\frac d2-1)+h+1)}\\
&\quad\cdot(c(t-\sum_{k=1}^{j-1}\tau_k)||\underline{\alpha}_d||)^{(n-j+2)\left(\frac d2-1\right)+h+\frac12}J_{(n-j+2)\left(\frac d2-1\right)+\frac12}(c(t-\sum_{k=1}^{j-1}\tau_k)||\underline{\alpha}_d||).
\end{align*}

The last step consists in the calculation of 

\begin{equation}\label{eq:laststep}
\int_{S_{j-1}^1}\prod_{k=1}^{j-1}\left\{\tau_k^{d-2}\frac{J_{\frac d2-1}(c\tau_k||\underline{\alpha}_d||)}{(c\tau_k||\underline{\alpha}_d||)^{\frac d2-1}}\right\}I_2(\underline{\tau}_{j-1})d\tau_1\cdots d\tau_{j-1}.
\end{equation}
The first integral in \eqref{eq:laststep} becomes
\begin{align*}
&\frac{\frac{(n-j+i)!/(n-j)!}{(\frac d2-1)^{n-j}}}{(c||\underline{\alpha}_d||)^{(2(n-j+1)+4)(\frac d2-1)+h+1}}\frac{\Gamma(\frac{d-1}{2})\Gamma((n-j+1)(\frac d2-1)+h+\frac12)}{\sqrt{2\pi}\Gamma((n-j+2)(\frac d2-1)+h+1)}\\
&\quad\cdot\int_0^{t-\sum_{k=1}^{j-2}\tau_k}(c\tau_k||\underline{\alpha}_d||)^{\frac d2-1}J_{\frac d2-1}(c\tau_k||\underline{\alpha}_d||)\\
&\quad\cdot(c(t-\sum_{k=1}^{j-1}\tau_k)||\underline{\alpha}_d||)^{(n-j+2)\left(\frac d2-1\right)+h+\frac12}J_{(n-j+2)\left(\frac d2-1\right)+\frac12}(c(t-\sum_{k=1}^{j-1}\tau_k)||\underline{\alpha}_d||)d\tau_{j-1}\\
&=\frac{\frac{(n-j+i)!/(n-j)!}{(\frac d2-1)^{n-j}}}{(c||\underline{\alpha}_d||)^{(2(n-j+1)+4)(\frac d2-1)+h+2}}\frac{\left(\Gamma(\frac{d-1}{2})\right)^2\Gamma((n-j+1)(\frac d2-1)+h+\frac12)}{(\sqrt{2\pi})^2\Gamma((n-j+3)(\frac d2-1)+h+\frac32)}\\
&\quad\cdot(c(t-\sum_{k=1}^{j-2}\tau_k)||\underline{\alpha}_d||)^{(n-j+3)\left(\frac d2-1\right)+h+1}J_{(n-j+3)\left(\frac d2-1\right)+1}(c(t-\sum_{k=1}^{j-2}\tau_k)||\underline{\alpha}_d||),
\end{align*}
where in the last step we have applied \eqref{eq:int1}. Therefore, by continuing at the same way, we get that
\begin{align*}
&\int_{S_{j-1}^1}\prod_{k=1}^{j-1}\left\{\tau_k^{d-2}\frac{J_{\frac d2-1}(c\tau_k||\underline{\alpha}_d||)}{(c\tau_k||\underline{\alpha}_d||)^{\frac d2-1}}\right\}I_2(\underline{\tau}_{j-1})d\tau_1\cdots d\tau_{j-1}\\
&=\frac{\frac{(n-j+i)!/(n-j)}{(\frac d2-1)^{n-j}}}{(c||\underline{\alpha}_d||)^{2(n+1)(\frac d2-1)+h+j}}\frac{\left(\Gamma(\frac{d-1}{2})\right)^j\Gamma((n-j+1)(\frac d2-1)+h+\frac12)}{(\sqrt{2\pi})^j\Gamma((n+1)(\frac d2-1)+h+\frac{j+1}{2})}\\
&\quad\cdot(ct||\underline{\alpha}_d||)^{(n+1)\left(\frac d2-1\right)+h+\frac j2}J_{(n+1)\left(\frac d2-1\right)+\frac j2}(ct||\underline{\alpha}_d||)
\end{align*}

Finally, the characteristic function of $\underline{\bf X}_d^{h,i,j}$ is equal to
\begin{align*}
&\mathcal F_n^{h,i,j}(\underline\alpha_d)\\
&=\left\{2^{\frac d2-1}\Gamma\left(\frac d2\right)\right\}^{n+1}\frac{(n-j+i)!/(n-j)!}{(\frac d2-1)^{n-j}}\frac{\left(\Gamma(\frac{d-1}{2})\right)^j\Gamma((n-j+1)(\frac d2-1)+h+\frac12)}{(\sqrt{2\pi})^j\Gamma((n+1)(\frac d2-1)+h+\frac{j+1}{2})}\\
&\quad\cdot\frac{\Gamma\left(2(n+1)(\frac d2-1)+h+j+1\right)\Gamma\left((n-j+1)(\frac d2-1)+1-i\right)}{\left(\Gamma(d-1)\right)^j\left(\Gamma(\frac d2-1)\right)^{n-j}\Gamma(\frac d2-i)\Gamma\left(2(n-j+1)(\frac d2-1)+h+1\right)}\frac{J_{(n+1)\left(\frac d2-1\right)+\frac j2}(ct||\underline{\alpha}_d||)}{(ct||\underline{\alpha}_d||)^{(n+1)(\frac d2-1)+\frac j2}}.
\end{align*}
For $h=1$,
by applying the duplication formula for Gamma functions, we have that
$$\Gamma\left(\frac d2\right)=\sqrt{\pi}2^{2-d}\frac{\Gamma(d-1)}{\Gamma(\frac{d-1}{2})}$$
$$\Gamma\left((n+1)\left(\frac d2-1\right)+\frac{j+1}{2}\right)=\sqrt{\pi}2^{-2(n+1)(\frac d2-1)-j}\frac{\Gamma(2(n+1)(\frac d2-1)+1)}{\Gamma\left((n+1)(\frac d2-1)+\frac j2+1\right)}$$
$$\frac{\sqrt{\pi}\Gamma\left(2(n-j+1)\left(\frac d2-1\right)+2\right)}{\Gamma\left((n-j+1)\left(\frac d2-1\right)+\frac32\right)}=2^{2(n-j+1)\left(\frac d2-1\right)+1}\Gamma\left((n-j+1)\left(\frac d2-1\right)+1\right)$$
and after careful calculations we obtain that
\begin{align}\label{eq:cfgeneral}
&\mathcal F_n^{1,i,j}(\underline\alpha_d)=2^{(n+1)(\frac d2-1)+\frac j2}\Gamma\left((n+1)\left(\frac d2-1\right)+\frac j2+1\right)\frac{J_{(n+1)\left(\frac d2-1\right)+\frac j2}(ct||\underline{\alpha}_d||)}{(ct||\underline{\alpha}_d||)^{(n+1)(\frac d2-1)+\frac j2}}\end{align}
which does not depend on $i$.
For $h=0$ we take into account the following relationships 
$$\Gamma\left((n-j+1)\left(\frac d2-1\right)+\frac{1}{2}\right)=\sqrt{\pi}2^{-2(n-j+1)(\frac d2-1)}\frac{\Gamma(2(n-j+1)(\frac d2-1)+1)}{\Gamma\left((n-j+1)(\frac d2-1)+1\right)}$$
$$\frac{\sqrt{\pi}\Gamma\left(2(n+1)\left(\frac d2-1\right)+j+1\right)}{\Gamma\left((n+1)\left(\frac d2-1\right)+\frac j2+1\right)}=2^{2(n+1)\left(\frac d2-1\right)+j}\Gamma\left((n+1)\left(\frac d2-1\right)+\frac{j+1}{2}\right)$$
which leads to \eqref{eq:cfgeneral}. 

Fixed $j\in\{1,...,n-1\}$, the processes $\underline{\bf X}_d^{0,0,j}$, $\underline{\bf X}_d^{1,0,j}$, $\underline{\bf X}_d^{0,1,j}$, $\underline{\bf X}_d^{1,1,j}$ have the same characteristic function \eqref{eq:cfgeneral} and then the same probability distribution.

\end{proof}

In view of Theorem \ref{teo:4ident}, we point out that  the conditional probability distribution of $\underline{\bf X}_d^{h,i,j}$, for $j<n$, does not depend on the choice of the indexes $h,i$. Now, we provide the main result of this section.
\begin{theorem}\label{teo:denssw}
For $d\geq 3$, and $j\in\{1,...,n-1\}$, the random processes $\underline{\bf X}_d^{h,i,j}$ have
 density function given by 
\begin{align}\label{eq:densxhij}
p_n^j(\underline{\bf x}_d,t)&=\frac{P_n(\underline{{\bf X}}_d^{h,i,j}\in d\underline{{\bf x}}_d)}{\prod_{i=1}^ddx_i}\notag\\
&=
\frac{\Gamma\left((n+1)\left(\frac d2-1\right)+\frac j2+1\right)}{\Gamma(n(\frac d2-1)+\frac {j}{2})}\frac{(c^2t^2- ||\underline{\bf x}_d||^2)^{n(\frac d2-1)+\frac {j}{2}-1}}{\pi^{d/2}(ct)^{2(n+1)(\frac d2-1)+j}},
\end{align}
with $\underline{\bf x}_d\in \mathcal B_{ct}^d$.
\end{theorem}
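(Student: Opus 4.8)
The plan is to invert the explicit characteristic function \eqref{eq:cfgeneral} already produced in Theorem \ref{teo:4ident}. By that theorem the four processes $\underline{\bf X}_d^{h,i,j}$ share the single characteristic function \eqref{eq:cfgeneral}, so it suffices to Fourier-invert it once. Reusing the radial reduction carried out in the proof of Theorem \ref{th1} — namely, passing to hyperspherical coordinates and applying \eqref{eq: (2.12) DGO12}, which exploits that $\mathcal F_n^{1,i,j}(\underline\alpha_d)$ depends on $\underline\alpha_d$ only through $\rho=||\underline\alpha_d||$ — the $d$-dimensional inversion collapses to the single Hankel-type integral
\[
p_n^j(\underline{\bf x}_d,t)=\frac{1}{(2\pi)^{\frac d2}\,||\underline{\bf x}_d||^{\frac d2-1}}\int_0^\infty \rho^{\frac d2}\,J_{\frac d2-1}(\rho\,||\underline{\bf x}_d||)\,\mathcal F_n^{1,i,j}(\rho)\,d\rho,
\]
exactly as in the display leading to \eqref{eq:density}.

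First I would substitute \eqref{eq:cfgeneral}. Writing $\mu=(n+1)\left(\frac d2-1\right)+\frac j2$ for the order occurring there, the constant $2^{\mu}\Gamma(\mu+1)$ and the factor $(ct\rho)^{-\mu}$ come out, and the whole problem reduces to evaluating
\[
\int_0^\infty \rho^{\frac d2-\mu}\,J_{\frac d2-1}(\rho\,||\underline{\bf x}_d||)\,J_{\mu}(ct\rho)\,d\rho,
\]
a discontinuous Weber--Schafheitlin integral in the two arguments $||\underline{\bf x}_d||$ and $ct$. Since $\underline{\bf x}_d\in\mathcal B_{ct}^d$ forces $||\underline{\bf x}_d||<ct$, only the branch valid for the smaller first argument is needed, and the classical evaluation (see Gradshteyn and Ryzhik, 1980) writes this integral as an explicit ratio of Gamma functions times a Gauss hypergeometric function ${}_2F_1$ in the variable $||\underline{\bf x}_d||^2/(c^2t^2)$.

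The decisive step — and the only delicate point — is to verify that, with the Bessel of order $\frac d2-1$ carrying the smaller argument, the three parameters of the resulting ${}_2F_1$ are
\[
\frac d2,\qquad \frac d2-(n+1)\left(\frac d2-1\right)-\frac j2,\qquad \frac d2,
\]
so that the first and third coincide. The elementary collapse ${}_2F_1(\gamma,\beta;\gamma;z)=(1-z)^{-\beta}$ then turns the hypergeometric factor into the pure power $\bigl(1-||\underline{\bf x}_d||^2/(c^2t^2)\bigr)^{n(\frac d2-1)+\frac j2-1}$, which supplies precisely the factor $(c^2t^2-||\underline{\bf x}_d||^2)^{n(\frac d2-1)+\frac j2-1}$ of \eqref{eq:densxhij}. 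I expect the remaining work to be pure bookkeeping: the Gamma prefactor of the Weber--Schafheitlin formula leaves, after the two $\Gamma(\frac d2)$'s cancel, exactly $1/\Gamma\!\left(n(\frac d2-1)+\frac j2\right)$; the leading $||\underline{\bf x}_d||^{\frac d2-1}$ cancels the $||\underline{\bf x}_d||^{-(\frac d2-1)}$ in front; and the surviving powers of $2$, $c$, $t$ combine with $2^{\mu}\Gamma(\mu+1)$ from \eqref{eq:cfgeneral} to give $\pi^{-d/2}(ct)^{-2(n+1)(\frac d2-1)-j}$ and the factor $\Gamma\!\left((n+1)(\frac d2-1)+\frac j2+1\right)$, completing \eqref{eq:densxhij}.

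As a guiding consistency check — and an alternative route bypassing the inversion — I would observe that $\frac d2+\bigl[n(\frac d2-1)+\frac j2-1\bigr]=\mu$; hence \eqref{eq:cfgeneral} has exactly the form of the characteristic function of a ``solvable'' isotropic density of type \eqref{eq:introd2} with $b=n(\frac d2-1)+\frac j2-1$, whose Fourier transform is a known multiple of $J_{\frac d2+b}(ct\rho)/(ct\rho)^{\frac d2+b}$. Matching orders and constants, and invoking uniqueness of characteristic functions, identifies the density as \eqref{eq:densxhij}, with the normalizing constant pinned down equivalently by $\int_{\mathcal B_{ct}^d}p_n^j(\underline{\bf x}_d,t)\,d\underline{\bf x}_d=1$.
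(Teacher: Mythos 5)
Your proposal is correct and follows essentially the same route as the paper: the paper's proof simply inverts the characteristic function \eqref{eq:cfgeneral} ``following the same steps of the proof of Theorem 2 in De Gregorio and Orsingher (2012)'', and those steps are precisely your Hankel-type radial inversion followed by the Weber--Schafheitlin evaluation (Gradshteyn--Ryzhik 6.574(1)) with the ${}_2F_1(\gamma,\beta;\gamma;z)=(1-z)^{-\beta}$ collapse. Your parameter bookkeeping checks out --- with $\mu=(n+1)(\frac d2-1)+\frac j2$ one gets the exponent $\mu-\frac d2=n(\frac d2-1)+\frac j2-1$, the surviving factor $1/\Gamma\bigl(n(\frac d2-1)+\frac j2\bigr)$, and the constant $\Gamma(\mu+1)/\bigl(\pi^{d/2}(ct)^{2\mu}\bigr)$ --- so you reproduce \eqref{eq:densxhij} exactly, with the details the paper leaves implicit.
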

\begin{proof}
We get the density function of $\underline{\bf X}_d^{h,i,j}$ by inverting the characteristic function \eqref{eq:cfgeneral}. For $j\in\{1,...,n-1\}$, by following the same steps of the proof of Theorem 2 in De Gregorio and Orsingher (2012), we get the result \eqref{eq:densxhij}.
\end{proof}

\begin{remark}
The isotropic Pearson random walks admit density functions expressed in closed form just in few cases (see, for instance, Stadje, 1987, Orsingher and De Gregorio, 2007, Le Ca\"er, 2010 and De Gregorio and Orsingher, 2012). For this reason, the result \eqref{eq:densxhij} is interesting, because it allows to claim that $\underline{\bf X}_d^{h,i,j}$ represents an entire class of random walks which have explicit probability distributions. 
\end{remark}

\begin{remark}
As the reader can check \eqref{eq:densxhij} coincides with \eqref{eq:introd2} with 
$$A=\frac{1}{\pi^{d/2}(ct)^{2(n+1)(\frac d2-1)+j}}\frac{\Gamma\left((n+1)\left(\frac d2-1\right)+\frac j2+1\right)}{\Gamma(n(\frac d2-1)+\frac {j}{2})},\quad
 b=
 n\left(\frac d2-1\right)+\frac {j}{2}-1.
 $$ 
 Furthermore, for $j\geq n$, the random walk $\underline{\bf X}_d^{h,i,j}$ has density function given by (2.10) in De Gregorio and Orsingher (2012). 
\end{remark}


Bearing in mind the considerations done in Remark \ref{remis} about the isotropy of the random process, for $j<n$, we can write that
\begin{align*}
P_n(\underline{\bf X}_d^{h,i,j}\in \mathcal{B}_z^d)&=P_n(||\underline{\bf X}_d^{h,i,j}||<z)\\
&=\frac{2\pi^{\frac d2}}{\Gamma(\frac d2)}\int_0^z  r^{d-1}p_n^j(r,t)dr\\
&=\frac{\Gamma\left((n+1)\left(\frac d2-1\right)+\frac j2+1\right)}{\Gamma(n(\frac d2-1)+\frac {j}{2})\Gamma(\frac d2)}\int_0^{z^2/c^2t^2}r^{\frac d2-1}(1-r)^{n(\frac d2-1)+\frac {j}{2}-1}dr\\
&=\frac{B(\frac{z^2}{c^2t^2};\frac d2, n(\frac d2-1)+\frac {j}{2})}{B(\frac d2, n(\frac d2-1)+\frac {j}{2})},
\end{align*}
with $z<ct$. Then, if $\frac d2$ and $n(\frac d2-1)+\frac {j}{2}$ assume integer values (happening when both $\frac d2$ and $\frac {j}{2}$ are even), by means of the relationship \eqref{eq:relincombeta}, we get that
\begin{align}\label{eq:cdf}
P_n(\underline{\bf X}_d^{h,i,j}\in \mathcal{B}_z^d)
=\sum_{k=\frac d2}^{ (n+1)(\frac d2-1)+\frac {j}{2}}\binom{(n+1)(\frac d2-1)+\frac {j}{2}}{k}\left(\frac {z^2}{c^2t^2}\right)^k\left(1-\frac {z^2}{c^2t^2}\right)^{(n+1)(\frac d2-1)+\frac {j}{2}-k},
\end{align}
with $z<ct$.

\subsection{Solvable processes of second type}\label{subsec:solst}

 Let us indicate by $\underline{\bf Y}_d^{h,i}=\{\underline{\bf Y}_d^{h,i}(t),t>0\}$ the random walk \eqref{eq:definition} where the vector ${\bf \underline\tau}_n$ has probability distribution \eqref{eq:gdd} with parameters
\begin{equation}\label{eq:paramy1}
\underline{\bf a}_n=\left(\frac dh-1,...,\frac dh-1\right),\
\end{equation}
\begin{equation}\label{eq:paramy2}
 \underline{\bf b}_n=\left(1,...,1,\frac dh-i\right),
 \end{equation} with $h\in\{1,2\}$, $i\in\{0,1\}$ and $d\geq 2$.  These random walks coincides with the random flights with standard Dirichlet steps studied (independently) by De Gregorio and Orsingher (2012) and Le Ca\"er (2010) in the case $i=1$. The case $i=0$ has been treated by Le Ca\"er (2010) (actually the vector of parameters has values in a different order). 

Furthermore, we consider the switching phase random motions (in the sense explained in the previous section) $\underline{\bf Z}_d^{h,j}=\{\underline{\bf Z}_d^{h,j}(t),t>0\}$ defined by \eqref{eq:definition} and with lengths of the interval between consecutive changes of direction possessing generalized Dirichlet distribution \eqref{eq:gdd}. In this case the vector $\underline{\bf a}_n$ is defined as in \eqref{eq:paramy1} while:
\begin{itemize}
\item for $j\in\{1,...,n-1\}$, $h\in\{1,2\}$ and $d\geq 2$, the vector $\underline{\bf b}_n$ has entries given by
\begin{align} 
b_k=\begin{cases}\label{eq:paramz2}
1,&k\in\{1,...,n-1\}\setminus \{j\},\\
2,&k=j,\\
\frac dh-1,&k=n;
\end{cases}
\end{align}
\item for $j\in\{n,n+1,...\}$ and $d\geq 2$, one has that  $\underline{\bf b}_n=\left(1,...,1,\frac dh-1\right)$.

\end{itemize}

\begin{theorem}
Fixed $j\in\mathbb N$ and $h\in\{1,2\},$ the random processes
$\underline{\bf Y}_d^{h,i}$, $i\in\{0,1\}$, and $\underline{\bf Z}_d^{h,j}$, are identically distributed.
\end{theorem}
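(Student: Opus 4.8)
The plan is to show that all processes named in the statement share a single conditional characteristic function, whence identity of the conditional laws follows by Fourier inversion as in Theorem \ref{th1}. Three reductions organize the argument. First, since $\underline{\bf Y}_d^{h,i}$ has $\underline{\bf b}_n=(1,\dots,1,\frac dh-i)$, the generalized Dirichlet density \eqref{eq:gdd} collapses to a (rescaled) ordinary Dirichlet density with the single boundary factor $(t-\sum_{k=1}^n\tau_k)^{\frac dh-i-1}$; thus $\underline{\bf Y}_d^{h,i}$ is a standard Dirichlet random flight. Second, for $j\ge n$ the vector $\underline{\bf b}_n$ defining $\underline{\bf Z}_d^{h,j}$ equals $(1,\dots,1,\frac dh-1)$, so $\underline{\bf Z}_d^{h,j}$ coincides with $\underline{\bf Y}_d^{h,1}$ and there is nothing to prove; hence I may assume $j\in\{1,\dots,n-1\}$. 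Third, the computation splits into the two regimes $h=1$ and $h=2$, governed respectively by \eqref{eq:int1} and \eqref{eq:int2}: writing $y=c\tau_k\|\underline{\alpha}_d\|$, the combined density--Bessel factor of an interior step is $y^{\frac d2-1}J_{\frac d2-1}(y)$ when $h=1$ (the shape required by \eqref{eq:int1}) and $J_{\frac d2-1}(y)/y$ when $h=2$ (the shape required by \eqref{eq:int2}), because $a_k=\frac dh-1$ makes the density power $\tau_k^{a_k-1}$ exactly cancel or invert the $(c\tau_k\|\underline{\alpha}_d\|)^{-(\frac d2-1)}$ in the normalized Bessel term.

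For $\underline{\bf Y}_d^{h,i}$ I would evaluate the $n$-fold integral in \eqref{eq:cf} recursively, starting from the innermost variable $\tau_n$ and moving outward, exactly as in the proof of Theorem \ref{teo:4ident}. The terminal factor attached to $\tau_{n+1}=t-\sum_{k=1}^n\tau_k$ carries the exponent $\frac dh-i-1$, which places the free index $i\in\{0,1\}$ precisely in the role of the index $i$ of \eqref{eq:int1}--\eqref{eq:int2} (directly for $h=2$, and via the harmless relabelling $i\mapsto 1-i$ for $h=1$). Each application fuses two adjacent Bessel factors into one of higher order, and after $n$ steps one is left with a single term $J_{\nu}(ct\|\underline{\alpha}_d\|)/(ct\|\underline{\alpha}_d\|)^{\kappa}$ whose order $\nu$ depends only on $n,d,h$; the index $i$ survives merely inside an explicit product of Gamma and combinatorial constants.

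For $\underline{\bf Z}_d^{h,j}$ with $j<n$ I would partition the simplex integral into the three blocks $\{\tau_1,\dots,\tau_{j-1}\}$, $\{\tau_j\}$ and $\{\tau_{j+1},\dots,\tau_n\}$, mirroring the decomposition $S_{j-1}^1,S_j^2,S_{n-j}^3$ used in Theorem \ref{teo:4ident}. The only difference from $\underline{\bf Y}_d^{h,1}$ is the extra boundary factor $(t-\sum_{k=1}^j\tau_k)^{b_j-1}=t-\sum_{k=1}^j\tau_k$ produced by the switch $b_j=2$. Since this factor is constant with respect to the post-switch variables, it is pulled out of the innermost integration over $\{\tau_{j+1},\dots,\tau_n\}$, which is then evaluated exactly as for $\underline{\bf Y}$ and returns a single Bessel function of argument $c(t-\sum_{k=1}^j\tau_k)\|\underline{\alpha}_d\|$. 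At the middle step the extra linear factor raises the power of $(t-\sum_{k=1}^j\tau_k)$ by one, which is precisely the shift encoded by the index $i$ in \eqref{eq:int1}--\eqref{eq:int2}; integrating over $\tau_j$ and then over the pre-switch block $\{\tau_1,\dots,\tau_{j-1}\}$ reproduces the same final order $\nu$ and the same argument $ct\|\underline{\alpha}_d\|$ as for $\underline{\bf Y}_d^{h,1}$.

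The main obstacle is purely the reconciliation of the accumulated constants: each of the three computations delivers the same $J_\nu(ct\|\underline{\alpha}_d\|)/(ct\|\underline{\alpha}_d\|)^\nu$ but with a prefactor built from $C(\underline{\bf a}_n,\underline{\bf b}_n,t)$ together with the Gamma quotients generated by \eqref{eq:int1}--\eqref{eq:int2}. As in the handling of the cases $h=0,1$ in Theorem \ref{teo:4ident}, I would collapse these quotients with the Legendre duplication formula $\Gamma(z)\Gamma(z+\tfrac12)=\sqrt\pi\,2^{1-2z}\Gamma(2z)$ to verify that the prefactor is independent of $i$ and of $j$ and that it agrees across $\underline{\bf Y}_d^{h,0}$, $\underline{\bf Y}_d^{h,1}$ and $\underline{\bf Z}_d^{h,j}$. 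Since a conditional characteristic function determines the conditional law, the coincidence of these characteristic functions yields that the processes are identically distributed.
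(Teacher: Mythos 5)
Your proposal is correct and takes essentially the same route as the paper: the same three-block splitting of the simplex into $S_{j-1}^1$, $S_j^2$, $S_{n-j}^3$, the same recursive applications of \eqref{eq:int1} and \eqref{eq:int2} (with the extra linear factor from $b_j=2$ shifting the index $i$ of those formulae by one at the switching step, exactly as you describe), and the duplication formula to reconcile the Gamma constants. The only difference is one of economy rather than method: the paper does not recompute the laws of $\underline{\bf Y}_d^{h,i}$ --- it quotes Le Ca\"er (2010) for the $i$-independence and De Gregorio and Orsingher (2012) for the characteristic functions \eqref{eq:cf1} and \eqref{eq:cf2} --- and then writes out only the $\underline{\bf Z}_d^{1,j}$, $j<n$, computation in full (declaring $h=2$ analogous via \eqref{eq:int2}), whereas you rederive the $\underline{\bf Y}$ parts by the same Bessel recursion, including the correct relabelling $i\mapsto 1-i$ needed to fit \eqref{eq:int1} when $h=1$.
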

\begin{proof}
As observed by Le Ca\"er (2010) (see Section 5 of the cited reference) the random motions  $\underline{\bf Y}_d^{h,0}$ and $\underline{\bf Y}_d^{h,1}$ have the same probability distribution and then possess the same characteristic function. The characteristic function of  $\underline{\bf Y}_d^{1,1}$ has been obtained in De Gregorio and Orsingher (2012), formula (2.1), and yields
\begin{equation}\label{eq:cf1}
\hat{\mathcal F}_n^{1,1}(\underline\alpha_d)=\frac{2^{\frac{n+1}{2}(d-1)-\frac12}\Gamma(\frac{n+1}{2}(d-1)+\frac12)}{(ct||\underline{\alpha}_d||)^{\frac{n+1}{2}(d-1)-\frac12}}J_{\frac{n+1}{2}(d-1)-\frac12}(ct||\underline{\alpha}_d||),
\end{equation}
where $d\geq 2$, while the characteristic function of  $\underline{\bf Y}_d^{2,1}$ has been obtained in De Gregorio and Orsingher (2012), formula (2.2), and yields  
\begin{equation}\label{eq:cf2}
\hat{\mathcal F}_n^{2,1}(\underline\alpha_d)=\frac{2^{(n+1)\left(\frac d2-1\right)}\Gamma((n+1)\left(\frac d2-1\right)+1)}{(ct||\underline{\alpha}_d||)^{(n+1)\left(\frac d2-1\right)}}J_{(n+1)\left(\frac d2-1\right)}(ct||\underline{\alpha}_d||),
\end{equation}
where $d\geq 3$.
In order to complete the proof we show that, for $j<n$ (the case $j\geq n$ is trivial), the characteristic function of the random flight $\underline{\bf Z}_d^{h,j}$ coincides with \eqref{eq:cf1} (for $h=1$)  and \eqref{eq:cf2} (for $h=2$). We use similar arguments to those adopted in the proof of Theorem \ref{teo:4ident} and then we omit some details. 

Let us start with the case $h=1$.  Under the assumptions \eqref{eq:paramy1} and \eqref{eq:paramz2}, we have that the characteristic function of $\underline{\bf Z}_d^{1,j}$ becomes
\begin{align}\label{eq:cf3}
\tilde{\mathcal F}_n^{1,j}(\underline\alpha_d)
&=\left\{2^{\frac d2-1}\Gamma\left(\frac d2\right)\right\}^{n+1}\frac{\Gamma((n+1)(d-1)+1)}{(n-j+1)(d-1)(\Gamma(d-1))^{n+1}}\frac{1}{t^{(n+1)(d-1)}}\notag\\
&\quad\cdot\int_{S_{j-1}^1}\prod_{k=1}^{j-1}\left\{\tau_k^{d-2}\frac{J_{\frac d2-1}(c\tau_k||\underline{\alpha}_d||)}{(c\tau_k||\underline{\alpha}_d||)^{\frac d2-1}}\right\}d\tau_1\cdots d\tau_{j-1}\notag\\
&\quad\cdot \int_{S_j^2}\tau_j^{d-2}\frac{J_{\frac d2-1}(c\tau_k||\underline{\alpha}_d||)}{(c\tau_j||\underline{\alpha}_d||)^{\frac d2-1}}(t-\sum_{k=1}^{j}\tau_k)d\tau_j\notag\\
&\quad\cdot \int_{S_{n-j}^3}\prod_{k=j+1}^{n+1}\left\{\tau_k^{d-2}\frac{J_{\frac d2-1}(c\tau_k||\underline{\alpha}_d||)}{(c\tau_k||\underline{\alpha}_d||)^{\frac d2-1}}\right\}d\tau_{j+1}\cdots d\tau_n,
\end{align}
where $S_{j-1}^1,S_j^2$ and $S_{n-j}^3$ are defined by \eqref{eq:dom1}, \eqref{eq:dom2} and \eqref{eq:dom3} respectively.
By applying the result \eqref{eq:int1} for $i=0$ we obtain that the $(n-j)-$fold integral appearing in \eqref{eq:cf3} becomes
\begin{align*}
J_1(\underline{\tau}_j)
&=\int_{S_{n-j}^3}\prod_{k=j+1}^{n+1}\left\{\tau_k^{d-1}\frac{J_{\frac d2-1}(c\tau_k||\underline{\alpha}_d||)}{(c\tau_k||\underline{\alpha}_d||)^{\frac d2-1}}\right\}d\tau_{j+1}\cdots d\tau_n\notag\\
&=\frac{\left(\Gamma(\frac{d-1}{2})\right)^{n-j+1}}{(\sqrt{2\pi})^{n-j}\Gamma(\frac{n-j+1}{2}(d-1))}\frac{(c(t-\sum_{k=1}^{j}\tau_k)||\underline{\alpha}_d||)^{\frac{n-j+1}{2}\left( d-1\right)-\frac12}}{(c||\underline{\alpha}_d||)^{(n-j+1)(d-1)-1}}\\
&\quad\cdot J_{\frac{n-j+1}{2}\left( d-1\right)-\frac12}(c(t-\sum_{k=1}^{j}\tau_k)||\underline{\alpha}_d||)
\end{align*}
and then, by taking into account the result \eqref{eq:int1} for $i=1$, we get that
\begin{align*}
J_2(\underline{\tau}_{j-1})
&=\int_0^{t-\sum_{k=1}^{j-1}\tau_j}\tau_j^{d-2}\frac{J_{\frac d2-1}(c\tau_k||\underline{\alpha}_d||)}{(c\tau_j||\underline{\alpha}_d||)^{\frac d2-1}}(t-\sum_{k=1}^{j}\tau_k)J(\underline{\tau}_j)d\tau_{j}\notag\\
&=\frac{\left(\Gamma(\frac{d-1}{2})\right)^{n-j+2}\frac{n-j+1}{2}(d-1)}{(\sqrt{2\pi})^{n-j+1}\Gamma(\frac{n-j+2}{2}(d-1)+1)}\frac{(c(t-\sum_{k=1}^{j-1}\tau_k)||\underline{\alpha}_d||)^{\frac{n-j+2}{2}\left( d-1\right)+\frac12}}{(c||\underline{\alpha}_d||)^{(n-j+2)(d-1)}}\\
&\quad\cdot J_{\frac{n-j+2}{2}\left( d-1\right)-\frac12}(c(t-\sum_{k=1}^{j-1}\tau_k)||\underline{\alpha}_d||).
\end{align*}
Finally, by using again \eqref{eq:int1} for $i=1$ recursively, the last $(j-1)-$fold integral is equal to
\begin{align}\label{eq:lastexp}
&\int_{S_{j-1}^1}\prod_{k=1}^{j-1}\left\{\tau_k^{d-1}\frac{J_{\frac d2-1}(c\tau_k||\underline{\alpha}_d||)}{(c\tau_k||\underline{\alpha}_d||)^{\frac d2-1}}\right\}J_2(\underline{\tau}_{j-1})d\tau_{1}\cdots d\tau_{j-1}\notag\\
&=\frac{\left(\Gamma(\frac{d-1}{2})\right)^{n+1}\frac{n-j+1}{2}(d-1)}{(\sqrt{2\pi})^{n}\Gamma(\frac{n+1}{2}(d-1)+1)}\frac{(ct||\underline{\alpha}_d||)^{\frac{n+1}{2}\left( d-1\right)+\frac12}}{(c||\underline{\alpha}_d||)^{(n+1)(d-1)}}J_{\frac{n+1}{2}\left( d-1\right)-\frac12}(ct||\underline{\alpha}_d||)
\end{align}
and then by plugging \eqref{eq:lastexp} into \eqref{eq:cf3}, we obtain that $\tilde{\mathcal F}_n^{1,j}(\underline\alpha_d)$ coincides with \eqref{eq:cf1}.

For $h=2$, we can use steps similar to those adopted above together with the result \eqref{eq:int2}. Hence, we conclude that the characteristic function $\tilde{\mathcal F}_n^{2,j}(\underline\alpha_d)$ of $\underline{\bf Z}_d^{2,j}$ is equal to \eqref{eq:cf2}.
\end{proof}

\begin{remark}
The solvable random walks $\underline{\bf Y}_d^{1,i}$ and $\underline{\bf Z}_d^{1,j}$, with $i\in \{1,2\}$, $j\in\mathbb N$ and $d\geq 2$, have the same probability density function
\begin{equation}
\frac{\Gamma(\frac{n+1}{2}(d-1)+\frac12)}{\Gamma(\frac{n}{2}(d-1))}\frac{(c^2t^2- ||\underline{\bf x}_d||^2)^{\frac{n}{2}(d-1)-1}}{\pi^{d/2}(ct)^{(n+1)(d-1)-1}},\quad\underline{\bf x}_d\in\mathcal{B}_{ct}^d,
\end{equation}
 which is obtained by inverting \eqref{eq:cf1} (see (2.10) in De Gregorio and Orsingher, 2012). 

At the same way we have that $\underline{\bf Y}_d^{2,i}$ and $\underline{\bf Z}_d^{2,j}$, with $i\in \{1,2\}$, $j\in\mathbb N$ and $d\geq 3$, have the same probability density function
\begin{equation}
\frac{\Gamma((n+1)(\frac d2-1)+1)}{\Gamma(n(\frac d2-1))}\frac{(c^2t^2- ||\underline{\bf x}_d||^2)^{n(\frac d2-1)-1}}{\pi^{d/2}(ct)^{2(n+1)(\frac d2-1)}},\quad\underline{\bf x}_d\in\mathcal{B}_{ct}^d,
\end{equation}
which is obtained by inverting \eqref{eq:cf2} (see (2.11) in De Gregorio and Orsingher, 2012).

\end{remark}


\section{Some remarks on unconditional probability distributions}\label{sec4}
 So far, we have analyzed the random process \eqref{eq:definition}, with a fixed number of deviations. Formally, we can write the unconditional density function of \eqref{eq:definition} by means of the probability distribution $P(\mathcal N(t)=n)$ of $\mathcal N(t)$, with $n\geq 0$. We recall that $\mathcal N(t)$ is independent from $\underline{\theta}_{d-1}$ and $\underline{\tau}_{n}$. 

The absolutely component of the probability distribution of  $\{\underline{\bf X}_d(t),t>0\}$ is given by
\begin{equation}\label{eq:generaluncondf}
p(\underline{\bf x}_d,t)=\frac{P(\underline{\bf X}_d(t)\in d\underline{\bf x}_d)}{\prod_{k=1}^d dx_k}=\sum_{n=1}^\infty p_n(\underline{\bf x}_d,t)P(\mathcal N(t)=n),
\end{equation}
where $ p_n(\underline{\bf x}_d,t)$ is equals to \eqref{eq:density} and $\underline{\bf x}_d\in  \mathcal{B}_{ct}^d$. Furthermore the distribution of the process $\{\underline{\bf X}_d(t),t>0\}$ has a discrete component given by the probability that the random motion at time $t>0$ hits the edge $\partial  \mathcal{B}_{ct}^d=\mathbb{S}_{ct}^{d-1}=\{\underline{\bf x}_d\in \mathbb{R}^d:||\underline{\bf x}_d||=ct\}$. This probability emerges when the random walk does not change the initial direction, i.e. $\mathcal{N}(t)=0$. Hence, we can write 
$$P(\underline{\bf X}_d(t)\in \partial\mathcal{B}_{ct}^d)=P(\mathcal N(t)=0).$$
Therefore, the random process $\{\underline{\bf X}_d(t),t>0\}$ has support in the closed ball $\overline{\mathcal{B}_{ct}^d}=\mathcal{B}_{ct}^d\cup \partial \mathcal{B}_{ct}^d$ and the complete density function (in sense of generalized functions) reads
\begin{equation}\label{eq:uncondensgf}
p(\underline{\bf x}_d,t){\bf 1}_{\mathcal{B}_{ct}^d}(\underline{\bf x}_d)+P(\mathcal N(t)=0)\delta(ct-||\underline{\bf x}_d||),
\end{equation}
where $\delta(\cdot)$ is the Dirac's delta function.

 Under a suitable assumption on the law of the random number of changes of direction $\mathcal N(t)$, we are able to explicit the probability distribution (or equivalently \eqref{eq:uncondensgf}) for  $\underline{\bf Y}_d^{h,i}$ or equivalentely $\underline{\bf Z}_d^{h,j}$. This problem has been tackled in De Gregorio and Orsingher (2012).

Now we focus our attention on $\underline{\bf Z}_d^{2,j}$, for $\underline{\bf Z}_d^{1,j}$ holds the simlar considerations below. In order to obtain the unconditional probability distributions of $\underline{\bf Z}_d^{2,j}$ De Gregorio and Orsingher (2012) introduced the fractional Poisson process in the spirit of the paper by Beghin and Orsingher (2009). Let $\{\mathcal{N}_d(t),t>0\}$ be the fractional Poisson process  with distribution
\begin{equation}\label{eq:fracpoisson2}
P(\mathcal{N}_d(t)=n)=\frac{1}{E_{\frac d2-1,\frac{d}{2}}(\lambda t)}\frac{(\lambda t)^n}{\Gamma\left(n\left(\frac d2-1\right)+\frac d2\right)},\quad d\geq 3, n\geq 0,
\end{equation}
where $E_{\alpha,\beta}(x)=\sum_{k=0}^\infty \frac{x^k}{\Gamma(\alpha k+\beta)},\, x\in\mathbb{R},\alpha,\beta>0,$ is the generalized Mittag-Leffler function. For $\alpha=\beta=1$ the Mittag-Leffler function $E_{1,1}(x)$ becomes the exponential function and then in this case $\{\mathcal{N}_d(t),t>0\}$ reduces to the standard Poisson process.

If we assume that the number of deviations is a fractional Poisson process with distribution \eqref{eq:fracpoisson2}, the absolutely continuous component of the unconditional probability distributions of $\underline{\bf Z}_d^{2,j}, j\geq 0, d\geq 2,$ is equal to:
\begin{align}\label{eq:uncondlaw}
p(\underline{\bf x}_d,t)&=\frac{P(\underline{\bf Z}_d^{2,j}\in d\underline{\bf x}_d)}{\prod_{k=1}^d dx_k}=\frac{\lambda t(c^2t^2- ||\underline{\bf x}_d||^2)^{\frac{d}{2}-2}}{\pi^{d/2}(ct)^{4(\frac d2-1)}}\frac{E_{\frac d2-1,\frac{d}{2}-1}\left(\frac{\lambda t(c^2t^2- ||\underline{\bf x}_d||^2)^{\frac d2-1}}{(ct)^{d-2}}\right)}{E_{\frac d2-1,\frac{d}{2}}(\lambda t)},
\end{align}
with $\underline{\bf x}_d\in\mathcal{B}_{ct}^d$.

For the above considerations, we have that the probability distribution of $\underline{\bf Z}_d^{2,j}$ admits the following discrete component 
\begin{equation}
P(\underline{\bf Z}_d^{2,j}\in \partial \mathcal{B}_{ct}^d)=\frac{1}{E_{\frac d2-1,\frac{d}{2}}(\lambda t)}\frac{1}{\Gamma\left(\frac{d}{2}\right)}.
\end{equation}
From \eqref{eq:uncondensgf}, it is immediate to observe that the distance processes $R_d^{2,j}=\{R_d^{2,j}(t),t>0\}$, where $R_d^{2,j}(t)=||\underline{\bf Z}_d^{2,j}||$, have probability density function (in the sense of generalized functions) given by

\begin{equation}
r^{d-1}\text{meas}(\mathbb{S}_{1 }^{d-1})p(r,t){\bf 1}_{(0,ct)}(r)+\frac{1}{E_{\frac d2-1,\frac{d}{2}}(\lambda t)}\frac{1}{\Gamma\left(\frac{d}{2}\right)}\delta(ct-r).
\end{equation}

\begin{theorem}
For $p\geq 1$, the $p$-th moment of $R_d^{2,j},d\geq 3,$ becomes
\begin{align}
{\bf E}(R_d^{2,j})^p=\frac{(ct)^p}{E_{\frac d2-1,\frac{d}{2}}(\lambda t)}\left\{\frac{\Gamma(\frac{p+d}{2})}{\Gamma(\frac{d}{2})}\lambda tE_{\frac d2-1,d+\frac{p}{2}-1}(\lambda t)+\frac{1}{\Gamma\left(\frac{d}{2}\right)}\right\}.
\end{align}
\end{theorem}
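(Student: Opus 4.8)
The plan is to read off the $p$-th moment directly from the generalized density of $R_d^{2,j}$ and split it according to its two components. First I would write
\[
{\bf E}(R_d^{2,j})^p=\int_0^{ct}r^p\,r^{d-1}\text{meas}(\mathbb{S}_1^{d-1})\,p(r,t)\,dr+(ct)^p\frac{1}{E_{\frac d2-1,\frac d2}(\lambda t)}\frac{1}{\Gamma\left(\frac d2\right)},
\]
where $p(r,t)$ is the unconditional density \eqref{eq:uncondlaw} and the last term comes from sifting the Dirac atom at $r=ct$ against $r^p$. This discrete contribution is immediate and already reproduces the summand $\frac{(ct)^p}{E_{\frac d2-1,\frac d2}(\lambda t)}\frac{1}{\Gamma(\frac d2)}$ in the statement, so the work reduces to the integral over $(0,ct)$.

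For that integral I would insert $\text{meas}(\mathbb{S}_1^{d-1})=2\pi^{\frac d2}/\Gamma(\frac d2)$ together with the explicit form of \eqref{eq:uncondlaw}; the $\pi^{\frac d2}$ cancels and I am left with an integral of $r^{p+d-1}(c^2t^2-r^2)^{\frac d2-2}$ against the factor $E_{\frac d2-1,\frac d2-1}\!\left(\lambda t(c^2t^2-r^2)^{\frac d2-1}/(ct)^{d-2}\right)$. The key idea is to expand this generalized Mittag-Leffler function through its defining series $\sum_{k}x^k/\Gamma((\frac d2-1)(k+1))$ and interchange sum and integral, which is legitimate by Tonelli since every term is nonnegative. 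Each term then carries the integral $\int_0^{ct}r^{p+d-1}(c^2t^2-r^2)^{\frac d2-2+(\frac d2-1)k}\,dr$, and the substitution $u=r^2/(c^2t^2)$ converts it into a Beta integral equal to $\tfrac12(ct)^{p+2d-4+(d-2)k}B\!\left(\frac{p+d}{2},(\frac d2-1)(k+1)\right)$.

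The decisive simplification is that the factor $\Gamma((\frac d2-1)(k+1))$ produced by the Beta function cancels exactly the same factor in the denominator of the Mittag-Leffler series, while the powers of $ct$ carried by $x^k$ cancel those generated by the Beta reduction. What survives is $\tfrac12(ct)^{p+2d-4}\Gamma(\frac{p+d}{2})\sum_{k}(\lambda t)^k/\Gamma((\frac d2-1)k+d+\frac p2-1)$, and this last series is recognized as $E_{\frac d2-1,d+\frac p2-1}(\lambda t)$. Collecting the remaining constants, the power $(ct)^p$, and the normalizing $E_{\frac d2-1,\frac d2}(\lambda t)$ reproduces the first summand in the braces, and adding the atom's contribution from the first paragraph yields the asserted formula. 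The only delicate points are the careful bookkeeping of the Gamma parameters and of the powers of $ct$ through the Beta step, and the justification of the term-by-term integration; I expect the parameter bookkeeping in the Beta–Mittag-Leffler recombination to be the main place where errors could creep in, but neither step poses a genuine obstacle.
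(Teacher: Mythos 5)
Your proposal is correct and follows essentially the same route as the paper: split ${\bf E}(R_d^{2,j})^p$ into the integral of $r^{p+d-1}\,\mathrm{meas}(\mathbb{S}_1^{d-1})\,p(r,t)$ over $(0,ct)$ plus the atom's contribution $(ct)^p P(\mathcal{N}_d(t)=0)$, expand the Mittag--Leffler factor as a series, interchange sum and integral, and reduce each term to a Beta integral whose $\Gamma\left(\left(\frac d2-1\right)(k+1)\right)$ cancels the series denominator, leaving $\Gamma\left(\frac{p+d}{2}\right)E_{\frac d2-1,\,d+\frac p2-1}(\lambda t)$. Your bookkeeping of the Gamma parameters and powers of $ct$ checks out, and your explicit Tonelli justification for the term-by-term integration is a point the paper passes over silently.
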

\begin{proof}
We have to calculate
\begin{align*}
{\bf E}(R_d^{2,j})^p=\int_0^{ct}r^{p+d-1}\text{meas}(\mathbb{S}_{1 }^{d-1})p(r,t)dr+\frac{(ct)^p}{E_{\frac d2-1,\frac{d}{2}}(\lambda t)}\frac{1}{\Gamma\left(\frac{d}{2}\right)}.
\end{align*}
Now, we focus our attention on the integral appearing in the above equality. One has that
\begin{align*}
&\int_0^{ct}r^{p+d-1}\text{meas}(\mathbb{S}_{1 }^{d-1})p(r,t)dr\\
&=\frac{2\lambda t}{\Gamma(\frac d2)(ct)^{4(\frac d2-1)}E_{\frac d2-1,\frac{d}{2}}(\lambda t)}\int_0^{ct}r^{p+d-1}(c^2t^2- r^2)^{\frac{d}{2}-2}E_{\frac d2-1,\frac{d}{2}-1}\left(\frac{\lambda t(c^2t^2- r^2)^{\frac d2-1}}{(ct)^{d-2}}\right)dr\\
&=\frac{2\lambda t}{\Gamma(\frac d2)(ct)^{4(\frac d2-1)}E_{\frac d2-1,\frac{d}{2}}(\lambda t)}\sum_{k=0}^\infty\frac{\frac{(\lambda t)^k}{(ct)^{k(d-2)}}}{\Gamma(k(\frac d2-1)+\frac{d}{2}-1)}\int_0^{ct}r^{p+d-1}(c^2t^2- r^2)^{\frac{d}{2}-2+k(\frac{d}{2}-1)}dr\\
&=\frac{\lambda t (ct)^p}{\Gamma(\frac d2)E_{\frac d2-1,\frac{d}{2}}(\lambda t)}\sum_{k=0}^\infty\frac{(\lambda t)^k}{\Gamma(k(\frac d2-1)+\frac{d}{2}-1)}\int_0^1x^{\frac{p+d}{2}-1}(1-x)^{\frac{d}{2}-2+k(\frac{d}{2}-1)}dx\\
&=\frac{\lambda t (ct)^p\Gamma(\frac{p+d}{2})}{\Gamma(\frac d2)E_{\frac d2-1,\frac{d}{2}}(\lambda t)}\sum_{k=0}^\infty\frac{(\lambda t)^k}{\Gamma(k(\frac d2-1)+d+\frac{p}{2}-1)}
\end{align*}
which concludes the proof.
\end{proof}


\begin{thebibliography}{99}
 

 \bibitem{} Beghin, L., Orsingher, E. (2009) Fractional Poisson processes and related planar random motions. {\it Electronic Journal of Probability}, {\bf 14}, 1790-1826.
 
\bibitem{} Beghin, L., Orsingher, E. (2010) Moving randomly amid scattered obstacles. {\it Stochastics}, {\bf 82}, 201-229.


\bibitem{} Chang, W.Y.. Gupta, R.D.. Richards, D.S.P (2010) Structural properties of the generalized Dirichlet distributions. {\it Algebraic methods in statistics and probability II}, 109-124, Contemp. Math., 516. Amer. Math. Soc., Providence, RI.

\bibitem{} Connor, R.J., Mosimann, J.E. (1969) Concepts of independence for proportions with a generalization of the Dirichlet distribution. {\it Journal of American Statistical Association}, {\bf 64}, 194-206.





\bibitem{} De Gregorio, A. (2012) On random flights with non-uniformly distributed directions. {\it Journal of Statistical Physics}, {\bf 147}, 382-411.

\bibitem{} De Gregorio, A., Orsingher, E. (2012) Flying randomly in $\mathbb{R}^d$ with Dirichlet displacements, {\it Stochastic Processes and their Applications}, {\bf 122}, 676-713.

\bibitem{} Garcia-Pelayo, R. (2012) Exact solutions for isotropic random flights in odd dimensions. {\it Journal of Mathematical Physics}, {\bf 53}, 103504, 15 pp.



\bibitem{} Gradshteyn, I.S., Ryzhik, I.M. (1980)
\emph{Table of integrals, series and products}. Academic Press,
New York.


\bibitem{} Hughes, B. D. (1995) {\it Random walks and random environment. Vol. 1. Random walks}. Oxford Science Publications.

\bibitem{} Lebedev, N.N. (1972) {\it Special functions \& their applications}. Dover Publications, Inc.


\bibitem{} Le Ca\"er, G. (2010)  A Pearson random walk with steps of uniform orientation and Dirichlet distributed lengths. {\it Journal of Statistical Physics}, {\bf 140}, 728-751.

\bibitem{} Le Ca\"er, G. (2011)  A new family of solvable Pearson-Dirichlet random walks. {\it Journal of Statistical Physics}, {\bf 144}, 23-45.


\bibitem{} Masoliver M., Porrà
J.M., Weiss G.H. (1993) Some two and three-dimensional persistent
random walks. \emph{Physica A,} {\bf 193}, 469-482.

\bibitem{} Orsingher, E., De Gregorio, A. (2007) Random flights in higher spaces. \emph{Journal of Theoretical Probability}, {\bf 20},  769-806.


\bibitem{} Pogorui, A.A., Rodriguez-Dagnino, R.M. (2011) Isotropic random motion at finite speed with $K$-Erlang distributed direction alternations. {\it Journal of Statistical Physics}, {\bf 145}, 102-112.

\bibitem{} Pogorui, A.A., Rodriguez-Dagnino, R.M. (2012)  Random motion with uniformly distributed directions and random velocity. {\it Journal of Statistical Physics}, {\bf 147}, 1216-1225.

\bibitem{} Pogorui, A.A., Rodriguez-Dagnino, R.M. (2013) Random motion with gamma steps in higher dimensions. {\it Statistics and Probability Letters}, {\bf 83}, 1638-1643.




\bibitem{} Stadje W. (1987) The exact probability distribution of a
two-dimensional random walk. \emph{Journal of Statistical
Physics,} $\mathbf{46}$, 207-216.


\bibitem{}  Weiss G.H. (2002) Some applications of persistent random walks and the telegrapher's equation, {\it Physica A}, {\bf 311}, 381-410.




\end{thebibliography}
\end{document}